\newtheorem{theorem}{Theorem}
\newtheorem{proposition}[theorem]{Proposition}
\newtheorem{corollary}[theorem]{Corollary}
\theoremstyle{definition}
\newtheorem{example}[theorem]{Example}
\definecolor{webgreen}{rgb}{0,.5,0}
\definecolor{webbrown}{rgb}{.6,0,0}
\newcommand{\seqnum}[1]{\href{http://oeis.org/#1}{\underline{#1}}}
\begin{document}

\begin{center}
\vskip 1cm{\LARGE\bf  Laurent Biorthogonal Polynomials and Riordan Arrays}  \vskip 1cm
\large
Paul Barry\\
School of Science\\
Waterford Institute of Technology\\
Ireland\\

\end{center}
\vskip .2 in

\begin{abstract} We show that Laurent biorthogonal polynomials whose defining three-term recurrence have constant coefficients have coefficient arrays that are Riordan arrays. For each such family of Laurent biorthogonal polynomials we associate in a natural way a family of orthogonal polynomials. We also extend these results to a notion of generalized orthogonal polynomials whose recurrence depends on three parameters.
\end{abstract}

\section{Introduction}

It is well known that orthogonal polynomials on the real line are characterized by their three-term recurrences \cite{Chihara, Gautschi, Szego}. Specifically, if $P_n(x)$ is a family of orthogonal polynomials, with $P_{-1}(x)=0$ and $P_0(x)=1$, then we can find two sequences
$\alpha_n$ and $\beta_n$ such that

$$P_n(x)=(x-\alpha_n) P_{n-1}(x)-\beta_n P_{n-2}(x).$$

\noindent In the case that $\alpha_n=\alpha$ and $\beta_n=\beta$ are constant sequences, then the coefficient array $$\{d_{n,k}\}$$ of the polynomials is a Riordan array \cite{Barry_Moment, Barry_Meixner, SGWW}, where
$$P_n(x)=\sum_{k=0}^n d_{n,k}x^k.$$
This Riordan array will then be of the form
$$\left(\frac{1-\delta x-\epsilon x^2}{1+\alpha x+\beta x^2}, \frac{1}{1+\alpha x+\beta x^2}\right).$$

In this note, we shall look at the situation of Laurent biorthogonal polynomials whose defining recurrences have constant coefficients. Based on the above results for orthogonal polynomials, it is natural to ask if there is a similar connection between Riordan arrays and Laurent biorthogonal polynomials whose defining recurrences have constant coefficients. We shall show that the answer is in the affirmative.  We recall that a family of polynomials $P_n(x)$ is said to be a family of Laurent biorthogonal  polynomials \cite{Hendriksen, Kamioka_14, Kamioka_15, Zhedanov} if there exist two sequences $\alpha_n$ and $\beta_n$ such that
$$P_n(x)=(x-\alpha_n) P_{n-1}(x)- \beta_n x P_{n-2}(x).$$

\noindent The results in this note are typified by the following two propositions.

\begin{proposition} Let $P_n(x)=P_n(x;\alpha, \beta)$ be the family of Laurent biorthogonal  polynomials defined by the recurrence

$$P_n(x)=(x-\alpha) P_{n-1}(x)-\beta x P_{n-2}(x),$$
with $P_0(x)=1$ and $P_1(x)=x-\alpha$. Then the coefficient array of the polynomials $P_n(x)$ is given by the Riordan array
$$\left(\frac{1}{1+\alpha x}, \frac{x(1-\beta x)}{1+\alpha x}\right).$$
\end{proposition}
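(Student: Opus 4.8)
The plan is to pass to generating functions. Recall that a Riordan array $(g(t),f(t))$ has entries $d_{n,k}=[t^n]\,g(t)f(t)^k$, and hence its rows, viewed as the polynomials $\sum_k d_{n,k}x^k$, are packaged by the bivariate generating function
\[
\sum_{n\ge 0}\Big(\sum_{k=0}^n d_{n,k}x^k\Big)t^n=\sum_{k\ge 0}x^k\, g(t)f(t)^k=\frac{g(t)}{1-x\,f(t)}.
\]
So to prove the proposition it suffices to show that the ordinary generating function $F(x,t)=\sum_{n\ge0}P_n(x)t^n$ equals $\dfrac{g(t)}{1-x f(t)}$ with $g(t)=\dfrac1{1+\alpha t}$ and $f(t)=\dfrac{t(1-\beta t)}{1+\alpha t}$; a short computation shows this target expression equals $\dfrac1{1-(x-\alpha)t+\beta x t^2}$.

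First I would record that the recurrence $P_n=(x-\alpha)P_{n-1}-\beta x P_{n-2}$ is valid for all $n\ge1$ once we set $P_{-1}(x)=0$, since it then yields $P_1=(x-\alpha)P_0=x-\alpha$, matching the stated initial data. Next, multiply the recurrence by $t^n$ and sum over $n\ge1$. The left side contributes $F(x,t)-1$; the two terms on the right contribute $(x-\alpha)t\,F(x,t)$ and $-\beta x t^2\,F(x,t)$ respectively, the $P_{-1}=0$ convention keeping the index bookkeeping clean with no correction terms. This gives $F(x,t)\big(1-(x-\alpha)t+\beta x t^2\big)=1$, i.e. $F(x,t)=\dfrac1{1-(x-\alpha)t+\beta x t^2}$.

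Finally I would check the algebraic identity $\dfrac{1}{1+\alpha t}\Big/\Big(1-x\cdot\dfrac{t(1-\beta t)}{1+\alpha t}\Big)=\dfrac1{1+\alpha t-xt+\beta x t^2}=\dfrac1{1-(x-\alpha)t+\beta xt^2}$, obtained by clearing the common factor $1+\alpha t$. Comparing coefficients of $t^n$, and then of $x^k$ within each row, identifies the coefficient array of $P_n(x)$ with the claimed Riordan array, completing the argument.

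As for difficulty: there is no serious obstacle here. The only points requiring care are fixing the correct convention for the bivariate generating function of a Riordan array (which index is the row, which is the power of $x$), and making sure the low-order terms of the recurrence are handled consistently, which the $P_{-1}=0$ convention takes care of. An alternative, slightly more laborious route would be to verify directly, via the $A$-sequence of $\left(\tfrac{1}{1+\alpha t},\tfrac{t(1-\beta t)}{1+\alpha t}\right)$, that its rows obey the stated three-term recurrence and initial conditions; I would prefer the generating-function argument above for its brevity.
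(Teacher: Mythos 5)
Your argument is correct, and it takes a genuinely different route from the paper. The paper works at the level of the matrix entries: it derives the two-level recurrence $d_{n,k}=d_{n-1,k-1}-\alpha d_{n-1,k}-\beta d_{n-2,k-1}$ from the polynomial recurrence, computes the $A$-sequence of the candidate array $\left(\frac{1}{1+\alpha x},\frac{x(1-\beta x)}{1+\alpha x}\right)$ by reverting $f$, inverts the $A$-sequence relation to express $d_{n-1,k}$ back in terms of row $n$, and recombines these to show the Riordan array obeys the same recurrence. You instead sum the recurrence against $t^n$ (with the $P_{-1}=0$ convention) to obtain the closed form $\sum_n P_n(x)t^n=\frac{1}{1-(x-\alpha)t+\beta x t^2}$ and match it against the row generating function $\frac{g(t)}{1-xf(t)}$ of a Riordan array; the verification that $f(t)$ having zero constant term makes $\sum_k x^k g f^k$ a well-defined formal series is the only point of rigor to note, and your algebra checks out. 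Your route is shorter, and it has the advantage of handling the initial conditions and the $k=0$ column explicitly --- the paper's displayed proof only verifies the interior recurrence for $0<k<n$ and leaves the first column (the $Z$-sequence side) implicit; it is also aimed at the Bell-subgroup variant of Proposition 2 rather than at this statement directly. What the paper's heavier approach buys is the explicit $A$-sequence $1,-(\alpha+\beta),\beta(\alpha+\beta),\ldots$, which it reuses later; your bivariate generating function likewise reappears in the paper (in the form $\frac{1-\beta x}{1+(\alpha-t)x+\beta t x^2}=\sum_n P_n(t)x^n$ for the other normalization), so the two viewpoints are complementary.
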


\noindent Similarly, we have

\begin{proposition} Let $P_n(x)=P_n(x;\alpha, \beta)$ be the family of Laurent biorthogonal  polynomials defined by the recurrence

$$P_n(x)=(x-\alpha) P_{n-1}(x)-\beta x P_{n-2}(x),$$
with $P_0(x)=1$ and $P_1(x)=x-(\alpha+\beta)$. Then the coefficient array of the polynomials $P_n(x)$ is given by the Riordan array
$$\left(\frac{1-\beta x}{1+\alpha x}, \frac{x(1-\beta x)}{1+\alpha x}\right).$$
\end{proposition}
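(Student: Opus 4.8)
The plan is to compute the bivariate generating function of the row polynomials and match it against the generating function attached to the proposed Riordan array. Recall that for a proper Riordan array $(g(t),f(t))$ --- meaning $g(0)\neq 0$, $f(0)=0$, $f'(0)\neq 0$ --- whose $(n,k)$ entry is $d_{n,k}=[t^n]\,g(t)f(t)^k$, the row polynomials $\sum_k d_{n,k}x^k$ satisfy
$$\sum_{n\geq 0}\Bigl(\sum_k d_{n,k}x^k\Bigr)t^n=\sum_{k\geq 0}x^k g(t)f(t)^k=\frac{g(t)}{1-x\,f(t)}.$$
Conversely, $g$ and $f$ are recovered from this generating function $F(x,t)$: from $1/F=1/g-xf/g$ one reads off $g$ as the reciprocal of the $x$-free part of $1/F$ and $f=-g\cdot[x^1](1/F)$. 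Hence it suffices to prove that $F(x,t):=\sum_{n\geq 0}P_n(x)\,t^n$ equals $g(t)/(1-xf(t))$ for $g(t)=\frac{1-\beta t}{1+\alpha t}$ and $f(t)=\frac{t(1-\beta t)}{1+\alpha t}$.

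First I would convert the recurrence into a functional equation for $F$. Multiplying $P_n(x)=(x-\alpha)P_{n-1}(x)-\beta x P_{n-2}(x)$ by $t^n$, summing over $n\geq 2$, and inserting the initial data $P_0(x)=1$ and $P_1(x)=x-(\alpha+\beta)$, the linear-in-$t$ terms combine to $-\beta t$ (the $x-\alpha$ pieces cancelling), and one obtains
$$F(x,t)=1-\beta t+\bigl((x-\alpha)t-\beta x t^2\bigr)F(x,t),$$
so that $F(x,t)=\dfrac{1-\beta t}{1+(\alpha-x)t+\beta x t^2}$. A short computation then gives $1/F(x,t)=\dfrac{1+\alpha t}{1-\beta t}-xt$, which is exactly linear in $x$; the $x$-free part is $\dfrac{1+\alpha t}{1-\beta t}$ and the coefficient of $x$ is $-t$, so $g(t)=\dfrac{1-\beta t}{1+\alpha t}$ and $f(t)=t\,g(t)=\dfrac{t(1-\beta t)}{1+\alpha t}$, with $g(0)=1\neq 0$, $f(0)=0$, $f'(0)=1\neq 0$. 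This exhibits the coefficient array of the $P_n(x)$ as the stated Riordan array.

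An alternative route leverages Proposition~1 directly: the present recurrence is identical to the one there, and only the initial value $P_1$ is altered, by the constant $-\beta$. By linearity of the recurrence this forces $P_n(x)=\widetilde P_n(x)-\beta\widetilde P_{n-1}(x)$ for all $n\geq 0$, where $\widetilde P_n$ are the Laurent biorthogonal polynomials of Proposition~1; on coefficient arrays this replaces each column generating function $\widetilde g(t)\widetilde f(t)^k$ by $(1-\beta t)\widetilde g(t)\widetilde f(t)^k$, i.e.\ multiplies the first component $\widetilde g(t)=\frac{1}{1+\alpha t}$ of the Riordan array of Proposition~1 by $1-\beta t$ while keeping the second component $\frac{t(1-\beta t)}{1+\alpha t}$, which is exactly the claimed array. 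I do not expect a genuine obstacle in either approach; the only steps needing care are the correct handling of the $n=0,1$ boundary terms in the generating-function manipulation and the verification that $(g,f)$ satisfies the properness conditions of a Riordan array.
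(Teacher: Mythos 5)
Your proof is correct, but it takes a genuinely different route from the paper. You convert the three-term recurrence into a functional equation for the bivariate generating function $F(x,t)=\sum_n P_n(x)t^n$, solve to get $F=\frac{1-\beta t}{1+(\alpha-x)t+\beta x t^2}$, and match this against $g(t)/(1-xf(t))$; the paper instead extracts the entrywise recurrence $d_{n,k}=d_{n-1,k-1}-\alpha d_{n-1,k}-\beta d_{n-2,k-1}$ from the polynomial recurrence, computes the $A$-sequence of the candidate array via $A(x)=x/\bar f(x)$ (which requires solving a quadratic and produces $A(x)=\frac{2\beta x}{1-\alpha x-\sqrt{1-(\alpha+2\beta)x+\alpha^2x^2}}$), and then inverts the $A$-sequence relation to show the array's entries obey the same two-row recurrence. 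Your argument is more elementary and self-contained --- it avoids the compositional inverse and the series reversal entirely, and as a bonus it derives the generating function identity $\sum_n P_n(t)x^n=\frac{1-\beta x}{1+(\alpha-t)x+\beta tx^2}$ that the paper only records later (at the end of Section 2) as a consequence of the Riordan structure; what the paper's approach buys is an explicit display of the $A$- and $Z$-sequence machinery that it reuses in the sections on production matrices and determinant representations. Your alternative second route (writing $P_n=\widetilde P_n-\beta\widetilde P_{n-1}$ and multiplying the first component by $1-\beta t$) is also sound and neatly explains the relation between Propositions 1 and 2, though it presupposes Proposition 1, which the paper likewise leaves unproved; only minor care is needed there to set $\widetilde P_{-1}=0$ so that the shifted sequence satisfies the recurrence at $n=2$.
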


We will concentrate on this latter version, as we find it easier to work with the element $\left(\frac{1-\beta x}{1+\alpha x}, \frac{x(1-\beta x)}{1+\alpha x}\right)$ of the Bell subgroup of the Riordan group. Before we prove the above result, we recall some notation and elements of Riordan group theory. Readers familiar with Riordan groups may skip this section. 

For an integer sequence $a_n$, that is, an element of $\mathbb{Z}^\mathbb{N}$, the power series
$f(x)=\sum_{n=0}^{\infty}a_n x^n$ is called the \emph{ordinary generating function} or g.f. of the sequence.
$a_n$ is thus the coefficient of $x^n$ in this series. We denote this by
$a_n=[x^n]f(x)$. For instance, $F_n=[x^n]\frac{x}{1-x-x^2}$ is the $n$-th Fibonacci number \seqnum{A000045}, while
$S_n=[x^n]\frac{1-\sqrt{1-6x+x^2}}{2x}$ is the $n$-th (large) Schr\"oder number \seqnum{A006318}. 

For a power series
$f(x)=\sum_{n=0}^{\infty}a_n x^n$ with $f(0)=0$ we define the reversion or compositional inverse of $f$ to be the
power series $\bar{f}(x)$ such that $f(\bar{f}(x))=x$. We shall sometimes write this as
$\bar{f}= \text{Rev}f$.
\newline\newline
For a lower triangular matrix $(a_{n,k})_{n,k \ge 0}$ the row sums give the sequence with general term
$\sum_{k=0}^n a_{n,k}$. 
\noindent
The \emph{Riordan group} \cite{SGWW, Spru}, is a set of
infinite lower-triangular integer matrices, where each matrix is
defined by a pair of generating functions
$g(x)=1+g_1x+g_2x^2+\cdots$ and $f(x)=f_1x+f_2x^2+\cdots$ where
$f_1\ne 0$ \cite{Spru}. We assume in addition that $f_1=1$ in what follows. The associated matrix is the matrix whose
$i$-th column is generated by $g(x)f(x)^i$ (the first column being
indexed by 0). The matrix corresponding to the pair $g, f$ is
denoted by $(g, f)$ or $\cal{R}$$(g,f)$. The group law is then given
by
\begin{displaymath} (g, f)\cdot(h, l)=(g, f)(h, l)=(g(h\circ f), l\circ
f).\end{displaymath} The identity for this law is $I=(1,x)$ and the
inverse of $(g, f)$ is $(g, f)^{-1}=(1/(g\circ \bar{f}), \bar{f})$
where $\bar{f}$ is the compositional inverse of $f$.

If $\mathbf{M}$ is the matrix $(g,f)$, and
$\mathbf{a}=(a_0,a_1,\ldots)^T$ is an integer sequence with ordinary
generating function $\cal{A}$ $(x)$, then the sequence
$\mathbf{M}\mathbf{a}$ has ordinary generating function
$g(x)$$\cal{A}$$(f(x))$. The (infinite) matrix $(g,f)$ can thus be considered to act on the ring of
integer sequences $\mathbb{Z}^\mathbb{N}$ by multiplication, where a sequence is regarded as a
(infinite) column vector. We can extend this action to the ring of power series
$\mathbb{Z}[[x]]$ by
$$(g,f):\cal{A}(\mathnormal{x}) \mapsto \mathnormal{(g,f)}\cdot
\cal{A}\mathnormal{(x)=g(x)}\cal{A}\mathnormal{(f(x))}.$$
\begin{example} The so-called \emph{binomial matrix} $\mathbf{B}$ is the element
$(\frac{1}{1-x},\frac{x}{1-x})$ of the Riordan group. It has general
element $\binom{n}{k}$, and hence as an array coincides with Pascal's triangle. More generally, $\mathbf{B}^m$ is the
element $(\frac{1}{1-m x},\frac{x}{1-mx})$ of the Riordan group,
with general term $\binom{n}{k}m^{n-k}$. It is easy to show that the
inverse $\mathbf{B}^{-m}$ of $\mathbf{B}^m$ is given by
$(\frac{1}{1+mx},\frac{x}{1+mx})$.
\end{example}

The proof of the propositions above is dependent on the sequence characterization of Riordan arrays \cite{He, Alter}, and in particular on the A-sequence. One version of the sequence characterization of Riordan arrays is given below.

\begin{proposition} \cite{He} Let $D=[d_{n,k}]$ be an infinite triangular matrix. Then $D$ is a Riordan array if and only if there exist two sequences $A=[a_0,a_1,a_2,\ldots]$ and $Z=[z_0,z_1,z_2,\ldots]$ with $a_0 \neq 0$  such that
\begin{itemize}
\item $d_{n+1,k+1}=\sum_{j=0}^{\infty} a_j d_{n,k+j}, \quad (k,n=0,1,\ldots)$
\item $d_{n+1,0}=\sum_{j=0}^{\infty} z_j d_{n,j}, \quad (n=0,1,\ldots)$.
\end{itemize}
\end{proposition}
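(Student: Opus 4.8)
The plan is to translate the two conditions into statements about the generating functions $c_k(x)=\sum_{n\ge 0}d_{n,k}x^n$ of the columns of $D$, and to lean on one elementary bookkeeping fact: since $D$ is lower triangular, $c_k(x)=O(x^k)$, so each sum $\sum_{j\ge 0}a_jd_{n,k+j}$ is finite, and, crucially, the data $\bigl(d_{0,0},A,Z\bigr)$ determines \emph{every} entry of a lower-triangular $D$ through the two displayed recurrences (compute the rows in order; the forced vanishing $d_{n+1,m}=0$ for $m>n+1$ is automatically consistent with the A-relation). So each implication reduces to matching the triple $\bigl(d_{0,0},A,Z\bigr)$ with a Riordan pair. (One should also carry the standing assumption $d_{0,0}\neq 0$, as is implicit in speaking of ``triangular'' arrays: otherwise the recurrences force $D=0$, which is not a Riordan array.)

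For the forward implication I take $D=(g,f)$, so $c_k(x)=g(x)f(x)^k$; in particular $g=c_0$, $g(0)=d_{0,0}$, $f(0)=0$ and $f'(0)\ne 0$. I put $A(x)=x/\bar{f}(x)$, a genuine power series with $a_0=f'(0)\ne 0$; substituting $x\mapsto f(x)$ turns this into the functional equation $f(x)=xA(f(x))$. Then
$$c_{k+1}=g\,f^{k+1}=g\,f^{k}\cdot xA(f)=x\sum_{j\ge 0}a_j\,g\,f^{k+j}=x\sum_{j\ge 0}a_j\,c_{k+j},$$
and comparing coefficients of $x^{n+1}$ gives $d_{n+1,k+1}=\sum_{j}a_jd_{n,k+j}$. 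For the $Z$-sequence I set $Z(x)=P(\bar{f}(x))$ with $P(x)=\bigl(g(x)-g(0)\bigr)/\bigl(xg(x)\bigr)$; here $P$ is a genuine power series because $g(x)-g(0)$ vanishes at $0$ while $xg(x)$ has a simple zero, and composition with $\bar{f}$ keeps $Z\in\mathbb{Z}[[x]]$. Since then $xg(x)Z(f(x))=g(x)-g(0)$, i.e.\ $x\sum_j z_j c_j=c_0-d_{0,0}$, comparing coefficients of $x^{n+1}$ yields $d_{n+1,0}=\sum_j z_jd_{n,j}$.

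For the converse I start from given $A$ (with $a_0\ne 0$) and $Z$. I let $f$ be the unique power series with $f(0)=0$ solving $f=xA(f)$ (it exists and is unique since $A(0)=a_0\ne 0$), put $g=d_{0,0}/\bigl(1-xZ(f)\bigr)$ (well defined, with $g(0)=d_{0,0}$, as $xZ(f)$ has zero constant term), and form $R=(g,f)$. By the forward implication, $R$ has an $A$-sequence and a $Z$-sequence; but $f=xA(f)$ rearranges to $y=\bar{f}(y)A(y)$, identifying the $A$-sequence of $R$ with our $A$, and $g=d_{0,0}/(1-xZ(f))$ rearranges to $xZ(f)=\bigl(g-d_{0,0}\bigr)/g$, identifying the $Z$-sequence of $R$ with our $Z$. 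Since $R$ and $D$ are both lower triangular, agree at $(0,0)$, and satisfy the same two recurrences, the uniqueness observation forces $R=D$, so $D$ is a Riordan array.

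The hard part is not the computation, which is routine, but the observation that the A-relation is ``infinite'' in $k$: it expresses column $k+1$ in terms of columns $k,k+1,k+2,\dots$, so it cannot be used to prove $c_k=g f^k$ by a naive induction on $k$. Structuring the argument around the uniqueness of a triangular $D$ given $\bigl(d_{0,0},A,Z\bigr)$, and building the candidate Riordan pair directly from the functional equations, is exactly what circumvents this; the only remaining delicate point is the order-of-vanishing check that makes $Z$ an honest power series.
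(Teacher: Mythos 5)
Your proof is correct, but note that the paper does not actually prove this proposition: it is quoted verbatim from He and Sprugnoli (the \cite{He} tag) and used as a black box, with the paper only recording the consequent formulas $A(x)=x/\bar{f}(x)$ and $Z(x)=\frac{1}{\bar{f}(x)}\bigl[1-d_{0,0}/g(\bar{f}(x))\bigr]$. So there is no in-paper argument to compare against; what you have written is a self-contained proof consistent with those formulas (your $Z(x)=P(\bar{f}(x))$ with $P=(g-g(0))/(xg)$ is exactly the paper's expression for $Z$). The structure is sound: the forward direction via the functional equations $f=xA(f)$ and $xgZ(f)=g-g(0)$ is the standard computation, and your handling of the converse is the genuinely nontrivial part done right --- since the $A$-relation expresses column $k+1$ in terms of all columns $\geq k$, one cannot induct on columns, and your observation that a triangular array is reconstructed row by row from $(d_{0,0},A,Z)$ (with the vanishing above the diagonal consistent with the $A$-relation) is precisely the uniqueness principle that closes the argument. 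Two small caveats: the standing hypothesis $d_{0,0}\neq 0$ you add is indeed needed and is implicit in the sources (a ``proper'' triangular array); and the parenthetical claim that $Z\in\mathbb{Z}[[x]]$ holds only under the paper's normalization $g(0)=1$, $f'(0)=1$ (in general $1/g$ need not have integer coefficients), though nothing in the proposition requires integrality of $A$ and $Z$, so this does not affect correctness.
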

The coefficients $a_0,a_1,a_2,\ldots$ and $z_0,z_1,z_2,\ldots$ are called the $A$-sequence and the $Z$-sequence of the Riordan array
$D=(g(x),f(x))$, respectively. Letting $A(x)$ and $Z(x)$ denote the generating functions of these sequences, respectively, we have \cite{Alter} that
$$\frac{f(x)}{x}=A(f(x)), \quad g(x)=\frac{d_{0,0}}{1-xZ(f(x))}.$$ We therefore deduce that
$$A(x)=\frac{x}{\bar{f}(x)},$$ and
$$Z(x)=\frac{1}{\bar{f}(x)}\left[1-\frac{d_{0,0}}{g(\bar{f}(x))}\right].$$
When $(g, f)=(g, xg)$, we then obtain that
$$Z(x)=\frac{A(x)-1}{x}.$$

\noindent We now turn to the proof of the proposition.
\begin{proof}
We let $\left(d_{n,k}\right)$ be the coefficient array of the polynomials $P_n(x)$ that are defined by the recurrence
$$P_n(x)=(x-\alpha) P_{n-1}(x)-\beta x P_{n-2}(x).$$
\noindent
The recurrence implies that
$$\sum_{k=0}^n d_{n,k}x^k=(x-\alpha) \sum_{k=0}^{n-1}d_{n-1,k}x^k-\beta x \sum_{k=0}^{n-2}d_{n-2,k}x^k,$$
or
$$\sum_{k=0}^n d_{n,k}x^k=\sum_{k=0}^{n-1}d_{n-1,k}x^{k+1}-\alpha \sum_{k=0}^{n-1}d_{n-1,k}x^k - \beta \sum_{k=0}^{n-2}d_{n-2,k}x^{k+1},$$
or
$$\sum_{k=0}^n d_{n,k}x^k=\sum_{k=1}^n d_{n-1,k-1}x^k-\alpha \sum_{k=0}^{n-1}d_{n-1,k}x^k - \beta \sum_{k=1}^{n-1}d_{n-2,k-1}x^k.$$
Thus for $0<k<n$, we require that
$$d_{n,k}=d_{n-1,k-1}-\alpha d_{n-1,k}- \beta d_{n-2,k-1}.$$

Now the A-sequence of the Riordan array $\left(\frac{1}{1+\alpha x}, \frac{x(1-\beta x)}{1+\alpha x}\right)$ (and that of $\left(\frac{1-\beta x}{1+\alpha x}, \frac{x(1-\beta x)}{1+\alpha x}\right)$) is generated by the power series
$$A(x)=\frac{x}{\bar{f}(x)},$$ where $f(x)=\frac{x(1-\beta x)}{1+\alpha x}$. Thus
$$A(x)=\frac{2 \beta x}{1-\alpha x - \sqrt{1-(\alpha+2\beta)x+\alpha^2 x^2}},$$ and the A-sequence $[a_0, a_1, a_2,\ldots]$ begins
 $$1, -(\alpha+\beta), \beta(\alpha+\beta), -\beta(\alpha+\beta)(\alpha+2\beta),-\beta (\alpha+\beta)((\alpha^2+5 \alpha \beta+5 \beta^2), \ldots.$$
By the sequence characterization of Riordan arrays, we have
$$d_{n,k+1}=a_0d_{n-1,k}+a_1 d_{n-1,k+1}+a_2 d_{n-1,k+2}+\ldots .$$ We can reverse this relation \cite{Alter} using the coefficients $b_n$ obtained by
$$b_n=-\frac{1}{a_0} \sum_{j=1}^n a_j b_{n-j}, \quad b_0=\frac{1}{a_0},$$ to obtain
$$d_{n-1,k}=a_0d_{n,k+1}-a_1 d_{n,k+1}-(a_2-a_1^2) d_{n,k+2}-(a_3-2a_1a_2+a_1^3)d_{n,k+3}+\ldots$$ which in this case gives
$$d_{n-1,k}=d_{n,k+1}+(\alpha+\beta)d_{n,k+1}+(\alpha+\beta)(\alpha+2\beta)d_{n,k+2}+(\alpha+\beta)(\alpha^2+5\alpha \beta+5\beta^2)d_{n,k+3}+\ldots.$$
Thus we get
\begin{eqnarray*} d_{n+1,k+1}&=&d_{n,k}-(\alpha+\beta)d_{n,k+1}-\beta(\alpha+\beta)d_{n,k+2}-\beta(\alpha+\beta)(\alpha+2\beta)d_{n,k+3}+\ldots \\
&=&d_{n,k}-\alpha d_{n,k+1}-\beta \{d_{n,k+1}+(\alpha+\beta)d_{n,k+2}+\beta(\alpha+\beta)(\alpha+2\beta)d_{n,k+3}+\ldots\},\\
&=&d_{n,k}-\alpha d_{n,k+1}-\beta d_{n-1,k}.\end{eqnarray*}
\end{proof}
\noindent We can derive an expression for $P_n(x)$ using the definition of Riordan arrays. The $(n,k)$-th element $d_{n,k}$ of the coefficient array $\left(\frac{1-\beta x}{1+\alpha x}, \frac{x(1-\beta x)}{1+\alpha x}\right)$ is given by
\begin{eqnarray*}d_{n,k}&=&[x^n] \frac{1-\beta x}{1+\alpha x}\left(\frac{x(1-\beta x)}{1+\alpha x}\right)^k\\
&=& [x^{n-k}]\frac{(1-\beta x)^{k+1}} {(1+\alpha x)^{k+1}}\\
&=& (-1)^{n-k} \sum_{j=0}^{k+1}\binom{k+1}{j}\binom{n-j}{n-k-j}\alpha^{n-k-j}\beta^j. \end{eqnarray*}
Thus we get
$$P_n(x)=\sum_{k=0}^n (-1)^{n-k} \sum_{j=0}^{k+1} \binom{k+1}{j}\binom{n-j}{n-k-j}\alpha^{n-k-j}\beta^j x^k.$$
\noindent We note that an ``$\alpha_{i,j}$" matrix \cite{Alter} for the Riordan array $\left(\frac{1-\beta x}{1+\alpha x}, \frac{x(1-\beta x)}{1+\alpha x}\right)$ is given by
$$\begin{bmatrix} -\beta & 0\\1 &-\alpha \end{bmatrix}.$$
\begin{example} We take the simple case of $\alpha=\beta=1$. Thus we look at the Riordan array
$$\left(\frac{1-x}{1+x}, \frac{x(1-x)}{1+x}\right).$$ \noindent This matrix begins
\begin{displaymath}\left(\begin{array}{ccccccc} 1 & 0 & 0 & 0
&
0 & 0 & \cdots \\-2 & 1 & 0 & 0 & 0 & 0 & \cdots \\ 2 & -4 & 1 &
0 & 0 & 0 &
\cdots \\ -2 & 8 & -6 & 1 & 0 & 0 & \cdots \\ 2 & -12 & 18 & -8 &
1 & 0 & \cdots \\-2 & 16  & -38 & 32 & -10 & 1 &\cdots\\ \vdots
& \vdots &
\vdots & \vdots & \vdots & \vdots &
\ddots\end{array}\right),\end{displaymath} which is the coefficient array of the polynomials
$P_n(x)=P_n(x;1,1)$ which satisfy
$$P_n(x)=(x-1)P_{n-1}(x)-xP_{n-2}.$$
\noindent The inverse array begins
\begin{displaymath}\left(\begin{array}{ccccccc} 1 & 0 & 0 & 0
&
0 & 0 & \cdots \\2 & 1 & 0 & 0 & 0 & 0 & \cdots \\ 6 & 4 & 1 &
0 & 0 & 0 &
\cdots \\ 22 & 16 & 6 & 1 & 0 & 0 & \cdots \\ 90 & 68 & 30 & 8 &
1 & 0 & \cdots \\394 & 304  & 146 & 48 & 10 & 1 &\cdots\\ \vdots
& \vdots &
\vdots & \vdots & \vdots & \vdots &
\ddots\end{array}\right),\end{displaymath} where we see that the moments are given by
the large Schr\"oder numbers $1,2,6,22,90,\ldots$, \seqnum{A06318}.
\end{example}
\noindent Other examples of arrays of this type may be found for instance in the On-Line Encyclopedia of Integer Sequences \cite{SL1, SL2}, where the last two number triangles are \seqnum{A080246} and \seqnum{A080247}, respectively.
\section{Associated orthogonal polynomials}
We can associate a family of orthogonal polynomials to the family $P_n(x;\alpha, \beta)$ by taking a suitable  $\beta$-fold inverse binomial transform.
\begin{proposition} The matrix
$$\left(1, \frac{x}{1-\beta x}\right)^{-1} \cdot \left(\frac{1-\beta x}{1+\alpha x}, \frac{x(1-\beta x)}{1+\alpha x}\right)$$ is the coefficient array of the family of orthogonal polynomials $\tilde{P}_n(x)=\tilde{P}_n(x;\alpha, \beta)$ that satisfies the three-term recurrence
$$\tilde{P}_n(x)=(x-(\alpha+2\beta))\tilde{P}_{n-1}(x)-\beta(\alpha+\beta) \tilde{P}_{n-1}(x),$$
with $\tilde{P}_0(x)=1$ and $\tilde{P}_1(x)=x-(\alpha+\beta)$.
\end{proposition}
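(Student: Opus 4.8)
The plan is to compute the indicated matrix product explicitly as a Riordan array, read off its pair of generating functions, and then convert that Riordan structure into the claimed three-term recurrence via the bivariate generating function of the rows. First I would invert the left factor: since the reversion of $\frac{x}{1-\beta x}$ is $\frac{x}{1+\beta x}$, the Riordan inverse formula gives $\left(1,\frac{x}{1-\beta x}\right)^{-1}=\left(1,\frac{x}{1+\beta x}\right)$. Next I would apply the group law $(g,f)(h,l)=(g\cdot(h\circ f),\,l\circ f)$ with $g=1$, $f=\frac{x}{1+\beta x}$, $h=\frac{1-\beta x}{1+\alpha x}$ and $l=\frac{x(1-\beta x)}{1+\alpha x}$. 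Evaluating $h$ and $l$ at $\frac{x}{1+\beta x}$, where $1-\beta\frac{x}{1+\beta x}=\frac{1}{1+\beta x}$ and $1+\alpha\frac{x}{1+\beta x}=\frac{1+(\alpha+\beta)x}{1+\beta x}$ do the simplification, yields
$$\left(1,\frac{x}{1+\beta x}\right)\cdot\left(\frac{1-\beta x}{1+\alpha x},\frac{x(1-\beta x)}{1+\alpha x}\right)=\left(\frac{1}{1+(\alpha+\beta)x},\frac{x}{(1+\beta x)(1+(\alpha+\beta)x)}\right),$$
and since $(1+\beta x)(1+(\alpha+\beta)x)=1+(\alpha+2\beta)x+\beta(\alpha+\beta)x^2$, this product is the Riordan array $\left(\frac{1}{1+(\alpha+\beta)x},\frac{x}{1+(\alpha+2\beta)x+\beta(\alpha+\beta)x^2}\right)$.

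It remains to identify this Riordan array as the coefficient array of orthogonal polynomials with the stated recurrence. For any Riordan array $(g,f)$ that is the coefficient array of polynomials $Q_n(x)=\sum_k d_{n,k}x^k$ (so $d_{n,k}=[y^n]g(y)f(y)^k$), summing $\sum_k x^k g(y)f(y)^k$ gives the bivariate identity $\sum_{n\ge 0}Q_n(x)y^n=\frac{g(y)}{1-xf(y)}$. Substituting $g(y)=\frac{1}{1+(\alpha+\beta)y}$ and $f(y)=\frac{y}{(1+\beta y)(1+(\alpha+\beta)y)}$ and clearing denominators, the common factor $1+(\alpha+\beta)y$ cancels and one is left with
$$\sum_{n\ge 0}\tilde P_n(x)y^n=\frac{1+\beta y}{1-(x-(\alpha+2\beta))y+\beta(\alpha+\beta)y^2}.$$
Multiplying through by the denominator and comparing coefficients of $y^n$ then gives $\tilde P_0(x)=1$, $\tilde P_1(x)=x-(\alpha+2\beta)+\beta=x-(\alpha+\beta)$, and for $n\ge 2$ the recurrence $\tilde P_n(x)=(x-(\alpha+2\beta))\tilde P_{n-1}(x)-\beta(\alpha+\beta)\tilde P_{n-2}(x)$; a constant-coefficient three-term recurrence of this shape with $\tilde P_0=1$ makes the $\tilde P_n$ orthogonal. (Equivalently, one may appeal directly to the normal form $\left(\frac{1-\delta x-\epsilon x^2}{1+Ax+Bx^2},\frac{x}{1+Ax+Bx^2}\right)$ for such coefficient arrays recalled in the introduction, noting that $\frac{1}{1+(\alpha+\beta)x}=\frac{1+\beta x}{1+(\alpha+2\beta)x+\beta(\alpha+\beta)x^2}$, so that $A=\alpha+2\beta$, $B=\beta(\alpha+\beta)$, $\delta=-\beta$, $\epsilon=0$.)

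There is no deep obstacle here; the work is essentially bookkeeping. The one place demanding care is the algebra — both the evaluation of the Riordan product and, more importantly, the cancellation of the factor $1+(\alpha+\beta)y$ when clearing denominators in the bivariate identity, which is precisely what turns the denominator into the expected quadratic $1-(x-(\alpha+2\beta))y+\beta(\alpha+\beta)y^2$ and thereby pins down both recurrence coefficients and the initial data. An alternative to the generating-function step would be to mirror the proof of the earlier proposition using the $A$- and $Z$-sequences of the product array, but the bivariate identity is the shorter route.
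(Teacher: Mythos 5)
Your proposal is correct, and its first half coincides with the paper's: both invert the left factor to $\left(1,\frac{x}{1+\beta x}\right)$ and compute the product as $\left(\frac{1}{1+(\alpha+\beta)x},\frac{x}{(1+\beta x)(1+(\alpha+\beta)x)}\right)$. Where you diverge is in extracting the recurrence. The paper passes to the \emph{inverse} of this array and exhibits its tridiagonal production matrix, with $\alpha+\beta$ and $\beta(\alpha+\beta)$ in the first row and constant diagonals $\alpha+2\beta$, $\beta(\alpha+\beta)$, $1$ thereafter; the three-term recurrence (and the orthogonality, via the Jacobi-matrix picture that the paper uses throughout for moments and Hankel transforms) is then read off from that matrix. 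You instead use the bivariate row generating function $\sum_n \tilde P_n(x)y^n=\frac{g(y)}{1-xf(y)}$, observe the cancellation of $1+(\alpha+\beta)y$ to get $\frac{1+\beta y}{1-(x-(\alpha+2\beta))y+\beta(\alpha+\beta)y^2}$, and compare coefficients of $y^n$. Your route is more elementary and self-contained --- it verifies the recurrence and both initial conditions directly, without invoking production-matrix theory, and it has the incidental merit of making explicit that the recurrence involves $\tilde P_{n-2}$ (the statement as printed has a typo, writing $\tilde P_{n-1}$ in the second term). The paper's route buys more: the production matrix it computes is reused immediately afterwards for the moment identification and the Stieltjes continued fraction, whereas your argument establishes orthogonality only by appeal to Favard's theorem in the final sentence (which, strictly, needs $\beta(\alpha+\beta)\neq 0$; the paper glosses over the same point). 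Both arguments are sound.
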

\begin{proof}
We have
$$\left(1,\frac{x}{1-\beta x}\right)^{-1}\cdot \left(\frac{1-\beta x}{1+\alpha x}, \frac{x(1-\beta x)}{1+\alpha x}\right)=
\left(\frac{1}{1+(\alpha+\beta)x},\frac{x}{(1+\beta x)(1+(\alpha+\beta )x}\right).$$
The inverse of this Riordan array has tri-diagonal production matrix which begins
\begin{displaymath}\left(\begin{array}{ccccccc} \alpha+\beta & 1 &
0
& 0 & 0 & 0 & \ldots \\ \beta(\alpha+\beta)  & \alpha+2\beta & 1 & 0 & 0 & 0 & \ldots \\ 0 &
\beta(\alpha+\beta)
& \alpha+2\beta  & 1 & 0 &
0 & \ldots \\ 0 & 0 & \beta(\alpha+\beta)  & \alpha+2\beta  & 1 & 0 & \ldots \\ 0 & 0 & 0
& \beta(\alpha+\beta)  & \alpha+2\beta & 1 & \ldots \\0 & 0 & 0 & 0 & \beta(\alpha+\beta)  & \alpha+2\beta
&\ldots\\
\vdots &
\vdots & \vdots & \vdots & \vdots & \vdots &
\ddots\end{array}\right),\end{displaymath}
from which we deduce the three-term recurrence.
\end{proof}
\begin{corollary}
$$\left(\frac{1-\beta x}{1+\alpha x}, \frac{x(1-\beta x)}{1+\alpha x}\right)=
\left(1,\frac{x}{1-\beta x}\right)\cdot \left(\frac{1}{1+(\alpha+\beta)x},\frac{x}{(1+\beta x)(1+(\alpha+\beta )x}\right).$$
\end{corollary}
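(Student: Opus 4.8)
The plan is to obtain the Corollary directly from the preceding Proposition, which states that
$$\left(1,\frac{x}{1-\beta x}\right)^{-1}\cdot \left(\frac{1-\beta x}{1+\alpha x}, \frac{x(1-\beta x)}{1+\alpha x}\right)=\left(\frac{1}{1+(\alpha+\beta)x},\frac{x}{(1+\beta x)(1+(\alpha+\beta)x)}\right).$$
I would simply left-multiply both sides of this identity by the Riordan array $\left(1,\tfrac{x}{1-\beta x}\right)$ and cancel, using associativity of the Riordan product together with the group axiom $M\cdot M^{-1}=I=(1,x)$. The only point that needs to be checked for this step to be legitimate is that $\left(1,\tfrac{x}{1-\beta x}\right)$ really is an invertible element of the Riordan group: it is of the form $(1,f)$ with $f(x)=x/(1-\beta x)=x+\beta x^2+\cdots$ having $f_1=1$, and solving $y/(1-\beta y)=x$ gives the compositional inverse $\bar f(x)=x/(1+\beta x)$, so that by the inversion formula $(g,f)^{-1}=(1/(g\circ\bar f),\bar f)$ we have $\left(1,\tfrac{x}{1-\beta x}\right)^{-1}=\left(1,\tfrac{x}{1+\beta x}\right)$.

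As an independent verification of the stated factorization I would also compute the product on the right-hand side directly from the Riordan multiplication law $(g,f)(h,l)=(g\cdot(h\circ f),\,l\circ f)$. Setting $f(x)=x/(1-\beta x)$, one finds $1+\beta f(x)=1/(1-\beta x)$ and $1+(\alpha+\beta)f(x)=(1+\alpha x)/(1-\beta x)$; substituting these into $h(x)=1/(1+(\alpha+\beta)x)$ and $l(x)=x/\bigl((1+\beta x)(1+(\alpha+\beta)x)\bigr)$ yields $h(f(x))=(1-\beta x)/(1+\alpha x)$ and $l(f(x))=x(1-\beta x)/(1+\alpha x)$, which reproduces precisely the two components of $\left(\tfrac{1-\beta x}{1+\alpha x},\tfrac{x(1-\beta x)}{1+\alpha x}\right)$.

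There is essentially no obstacle here: the Corollary is a formal restatement of the Proposition after multiplying through by an invertible element, and the only computation involved is a short rational-function simplification of the compositions $h\circ f$ and $l\circ f$. The one remark worth making explicit in the write-up is that $\left(1,\tfrac{x}{1-\beta x}\right)$ is exactly the array whose inverse was used in the Proposition to pass from $P_n(x;\alpha,\beta)$ to $\tilde P_n(x;\alpha,\beta)$, so the Corollary records the concrete statement that the Laurent biorthogonal coefficient array factors as this array applied to the coefficient array of the associated orthogonal family.
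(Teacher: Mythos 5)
Your proposal is correct and matches the paper's (implicit) reasoning: the corollary is stated without proof precisely because it is the preceding proposition left-multiplied by the invertible element $\left(1,\tfrac{x}{1-\beta x}\right)$, and your direct verification of the compositions $h\circ f$ and $l\circ f$ checks out. Nothing further is needed.
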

\begin{corollary} We have the relations $$P_n(x)=\sum_{k=0}^n \binom{n-1}{n-k}\beta^{n-k}\tilde{P}_k(x), \quad \quad \tilde{P}_n(x)=\sum_{k=0}^n \binom{n-1}{n-k}(-\beta)^{n-k}P_k(x).$$
\end{corollary}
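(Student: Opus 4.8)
The plan is to derive the two stated identities directly from the factorization in the preceding corollary, namely
$$\left(\frac{1-\beta x}{1+\alpha x}, \frac{x(1-\beta x)}{1+\alpha x}\right)=
\left(1,\frac{x}{1-\beta x}\right)\cdot \left(\frac{1}{1+(\alpha+\beta)x},\frac{x}{(1+\beta x)(1+(\alpha+\beta )x)}\right),$$
together with its inverse form from Proposition~6. The left-hand factorization says that the coefficient array of $P_n(x)$ is obtained from the coefficient array of $\tilde{P}_n(x)$ by left-multiplication by the Riordan array $\left(1,\frac{x}{1-\beta x}\right)$. So the first step is simply to identify the $(n,k)$ entry of $\left(1,\frac{x}{1-\beta x}\right)$: its $k$-th column is generated by $\left(\frac{x}{1-\beta x}\right)^k = \frac{x^k}{(1-\beta x)^k}$, so the $(n,k)$ entry is $[x^n]\frac{x^k}{(1-\beta x)^k} = [x^{n-k}](1-\beta x)^{-k} = \binom{n-k+k-1}{n-k}\beta^{n-k} = \binom{n-1}{n-k}\beta^{n-k}$.

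The second step is to read off what matrix multiplication means at the level of polynomials. If $C = (c_{n,k})$ is the coefficient array of $P_n$ and $\tilde{C}=(\tilde{c}_{n,k})$ that of $\tilde{P}_n$, then $C = M\tilde{C}$ with $M_{n,k}=\binom{n-1}{n-k}\beta^{n-k}$ gives $c_{n,j} = \sum_{k} M_{n,k}\tilde{c}_{k,j}$ for all $j$; multiplying by $x^j$ and summing over $j$ yields $P_n(x) = \sum_j c_{n,j} x^j = \sum_k M_{n,k}\sum_j \tilde{c}_{k,j}x^j = \sum_{k=0}^n \binom{n-1}{n-k}\beta^{n-k}\tilde{P}_k(x)$, which is the first identity. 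For the second identity I would use the inverse Riordan array, $\left(1,\frac{x}{1-\beta x}\right)^{-1} = \left(1,\frac{x}{1+\beta x}\right)$, whose $(n,k)$ entry by the same computation is $\binom{n-1}{n-k}(-\beta)^{n-k}$; since $\tilde{C} = \left(1,\frac{x}{1+\beta x}\right)C$, the same bookkeeping gives $\tilde{P}_n(x) = \sum_{k=0}^n \binom{n-1}{n-k}(-\beta)^{n-k}P_k(x)$.

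I do not expect a genuine obstacle here; the argument is essentially a translation of the matrix factorization into generating-function language, and the only thing requiring a moment's care is the binomial-coefficient bookkeeping, in particular checking the edge cases $k=0$ and $k=n$. For $k=n$ one gets $\binom{n-1}{0}=1$ as needed (both polynomials being monic of degree $n$), and for $k=0$ with $n\ge 1$ one gets $\binom{n-1}{n}=0$, consistent with the constant-term relation that is forced by comparing $P_n(0)$ against the weighted sum of the $\tilde P_k(0)$; I would note that the identities are to be interpreted with the usual convention $\binom{-1}{0}=1$ when $n=0$. A brief alternative I might include is to verify the first few cases ($n=0,1,2$) by hand against the explicit recurrences for $P_n$ and $\tilde{P}_n$ given earlier, as a sanity check, but the clean proof is the one-line consequence of the corollary.
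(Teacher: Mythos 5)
Your proposal is correct and follows exactly the route the paper intends: the corollary is stated as an immediate consequence of the preceding factorization $\left(\frac{1-\beta x}{1+\alpha x}, \frac{x(1-\beta x)}{1+\alpha x}\right)=\left(1,\frac{x}{1-\beta x}\right)\cdot\left(\frac{1}{1+(\alpha+\beta)x},\frac{x}{(1+\beta x)(1+(\alpha+\beta)x)}\right)$, and your identification of the $(n,k)$ entries of $\left(1,\frac{x}{1-\beta x}\right)$ and of its inverse $\left(1,\frac{x}{1+\beta x}\right)$ as $\binom{n-1}{n-k}\beta^{n-k}$ and $\binom{n-1}{n-k}(-\beta)^{n-k}$ is exactly the computation needed. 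The paper gives no explicit proof, but your argument is the intended one and is complete.
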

\noindent We finish this section by noting that
$$\left(\frac{1-\beta x}{1+\alpha x}, \frac{x(1-\beta x)}{1+\alpha x}\right)\cdot \frac{1}{1-tx}=\frac{1-\beta x}{1+(\alpha-t)x+\beta t x^2},$$ and hence we have the generating function
$$\frac{1-\beta x}{1+(\alpha-t)x+\beta t x^2}=\sum_{n=0}^{\infty} P_n(t)x^n.$$
\section{Moments and T-fractions}
We identify the moments of $P_n(x)$ and $\tilde{P}_n(x)$ with the first column elements of the inverses of their coefficient arrays. The moments coincide, since they both have the generating function
 $$\frac{1-\alpha x- \sqrt{1-(\alpha+2\beta)x+\alpha^2 x^2}}{2 \beta x}.$$ In order to find a closed expression for these moments, we calculate the inverse of $\left(\frac{1-\beta x}{1+\alpha x},\frac{x(1-\beta x)}{1+\alpha x}\right)$. We use Lagrange inversion \cite{Merlini_When} for this. Given that $\left(\frac{1-\beta x}{1+\alpha x},\frac{x(1-\beta x)}{1+\alpha x}\right)$ is an element of the Bell subgroup of the Riordan group, its inverse will be of the form $\left(\frac{v(x)}{x}, v(x)\right)$, where  $$v(x)=\text{Rev}\left(\frac{x(1-\beta x)}{1+\alpha x}\right).$$ \noindent
Then we have
\begin{eqnarray*}
[x^n]\frac{v(x)}{x} v(x)^k &=& [x^{n+1}] v(x)^{k+1}\\
&=& [x^{n+1}] \left(\text{Rev}\left(\frac{x(1-\beta x)}{1+\alpha x}\right)\right)^{k+1}\\
&=&\frac{1}{n+1} [x^n] (k+1) x^k\left(\frac{1+\alpha x}{1-\beta x}\right)^{n+1}\\
&=& \frac{k+1}{n+1}[x^{n-k}] \left(\frac{1+\alpha x}{1-\beta x}\right)^{n+1}\\
&=& \frac{k+1}{n+1} [x^{n-k}] \sum_{j=0}^{n+1} \alpha^j x^j \sum_{i=0}^{\infty} \binom{-(n+1)}{i}(-1)^i \beta^i x^i\\
&=& \frac{k+1}{n+1} \sum_{j=0}^{n+1} \binom{n+1}{j}\binom{2n-k-j}{n-k-j}\alpha^j \beta^{n-k-j}.\end{eqnarray*}
\noindent

Thus the moments $\mu_n$ of the family of Laurent biorthogonal polynomials $P_n(x)$ are given by
$$\mu_n=\frac{1}{n+1} \sum_{j=0}^{n+1} \binom{n+1}{j}\binom{2n-j}{n-j}\alpha^j \beta^{n-j}.$$ A more compact form is
$$\mu_n=\sum_{k=0}^n \binom{n+k}{2k}C_k \alpha^{n-k} \beta^k,$$ where $C_n=\frac{1}{n+1}\binom{2n}{n}$ is the $n$-the Catalan number \seqnum{A000108}.
The theory of $T$-fractions and Laurent biorthogonal polynomials now tells us that the moments $\mu_n$ are generated by the following continued fraction \cite{Wall}.
$$g_{\alpha,\beta}(x)=
\cfrac{1}{1-\alpha x-
\cfrac{\beta x}{1-\alpha x-
\cfrac{\beta x}{1-\alpha x- \cdots}}}.$$
Now the sequence $\mu_n$ is also the moment sequence for the orthogonal polynomials $\tilde{P}_n(x)$. Hence we also have the following Stieltjes continued fraction for $g_{\alpha, \beta}(x)$.
$$g_{\alpha, \beta}(x)=
\cfrac{1}{1-(\alpha+\beta)x-
\cfrac{\beta(\alpha+\beta)x^2}{1-(\alpha+2\beta)x-
\cfrac{\beta(\alpha+\beta)x^2}{1-(\alpha+2\beta)x-\cdots}}}.$$ \noindent From this we deduce the following result concerning the Hankel transform of the moments $\mu_n$ \cite{Layman}.
\begin{proposition} The Hankel transform of the moments $\mu_n$ of $P_n(x)$ is given by
$$h_n=(\beta(\alpha+\beta))^{\binom{n+1}{2}}.$$
\end{proposition}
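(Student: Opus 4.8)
The plan is to read off the Hankel transform directly from the Stieltjes continued fraction expansion of $g_{\alpha,\beta}(x)$ that has just been established. Recall the classical result (due to Heilermann, see e.g.\ \cite{Layman, Wall}) that if a sequence $\mu_n$ has ordinary generating function expressible as a Jacobi (Stieltjes-type) continued fraction
$$\sum_{n=0}^\infty \mu_n x^n = \cfrac{1}{1-b_0 x - \cfrac{\lambda_1 x^2}{1-b_1 x - \cfrac{\lambda_2 x^2}{1-b_2 x - \cdots}}},$$
then the Hankel determinant $h_n = \det(\mu_{i+j})_{0 \le i,j \le n}$ is given by the product formula $h_n = \prod_{i=1}^{n} \lambda_i^{\,n+1-i} = \lambda_1^n \lambda_2^{n-1} \cdots \lambda_n^1$, and in particular this depends only on the $\lambda_i$ and not on the $b_i$.

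First I would invoke the second continued fraction displayed just above the proposition, namely the Stieltjes form
$$g_{\alpha,\beta}(x) = \cfrac{1}{1-(\alpha+\beta)x - \cfrac{\beta(\alpha+\beta)x^2}{1-(\alpha+2\beta)x - \cfrac{\beta(\alpha+\beta)x^2}{1-(\alpha+2\beta)x - \cdots}}},$$
which is legitimate because $\mu_n$ is the moment sequence of the orthogonal polynomials $\tilde P_n(x)$ whose three-term recurrence coefficients were computed from the tridiagonal production matrix in Proposition 4. Reading off the parameters, we have $b_0 = \alpha+\beta$, $b_i = \alpha+2\beta$ for $i \ge 1$, and crucially $\lambda_i = \beta(\alpha+\beta)$ for every $i \ge 1$. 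Substituting into Heilermann's product formula gives
$$h_n = \prod_{i=1}^{n} \bigl(\beta(\alpha+\beta)\bigr)^{\,n+1-i} = \bigl(\beta(\alpha+\beta)\bigr)^{\sum_{i=1}^n (n+1-i)} = \bigl(\beta(\alpha+\beta)\bigr)^{\binom{n+1}{2}},$$
since $\sum_{i=1}^n (n+1-i) = \sum_{j=1}^n j = \binom{n+1}{2}$.

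There is essentially no obstacle here once the Stieltjes fraction is in hand: the entire content is the observation that all the $\lambda_i$ are equal to the single constant $\beta(\alpha+\beta)$, which is exactly the subdiagonal entry of the production matrix appearing in the proof of Proposition 4. The only point requiring a word of care is that the Hankel transform formula needs the $\lambda_i$ indexed starting from $i=1$ (the weight of the first ``descent'' in the lattice-path interpretation of the moments), and that the $b_0$ term, which is $\alpha+\beta$ rather than $\alpha+2\beta$, plays no role in $h_n$ — this is the standard phenomenon that Hankel transforms are blind to the diagonal recurrence coefficients. Thus the proof is a one-line appeal to the classical Hankel-determinant evaluation for Jacobi continued fractions, applied to the data $\lambda_i = \beta(\alpha+\beta)$.

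\begin{proof}
By the Stieltjes continued fraction expansion of $g_{\alpha,\beta}(x)$ established above, the moment sequence $\mu_n$ arises from a Jacobi continued fraction with diagonal parameters $b_0 = \alpha+\beta$, $b_i = \alpha+2\beta$ for $i \ge 1$, and off-diagonal parameters $\lambda_i = \beta(\alpha+\beta)$ for all $i \ge 1$. By the classical evaluation of Hankel determinants of such sequences \cite{Layman, Wall}, we have
$$h_n = \prod_{i=1}^n \lambda_i^{\,n+1-i} = \prod_{i=1}^n \bigl(\beta(\alpha+\beta)\bigr)^{n+1-i} = \bigl(\beta(\alpha+\beta)\bigr)^{\sum_{i=1}^n (n+1-i)}.$$
Since $\sum_{i=1}^n (n+1-i) = \sum_{j=1}^n j = \binom{n+1}{2}$, we conclude that $h_n = \bigl(\beta(\alpha+\beta)\bigr)^{\binom{n+1}{2}}$.
\end{proof}
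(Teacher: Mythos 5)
Your proof is correct and follows exactly the route the paper takes: the paper deduces the proposition directly from the Stieltjes/Jacobi continued fraction for $g_{\alpha,\beta}(x)$ displayed just above it, citing the classical Hankel-determinant evaluation. You have merely written out the Heilermann product formula $h_n=\prod_{i=1}^n\lambda_i^{\,n+1-i}$ explicitly, which the paper leaves implicit.
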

\begin{example} The large Schr\"oder numbers \seqnum{A006318}
$$S_n=\sum_{k=0}^n \binom{n+k}{2k}C_k$$ are the case $\alpha=\beta=1$. Thus the generating function for the large Schr\"oder numbers may be written
$$
\cfrac{1}{1- x-
\cfrac{x}{1- x-
\cfrac{ x}{1- x- \cdots}}}.$$
\noindent It is a classical result that the Hankel transform of the large Schr\"oder numbers is $2^{\binom{n+1}{2}}$.
\end{example}
\noindent
It is interesting to note that the row sums of the inverse matrix
$\left(\frac{1-\beta x}{1+\alpha x}, \frac{x(1-\beta x)}{1+\alpha x}\right)^{-1}$ have a generating function that can be expressed as the continued fraction
$$g_{\alpha, \beta}(x)=
\cfrac{1}{1-(\alpha+\beta+1)x-
\cfrac{\beta(\alpha+\beta)x^2}{1-(\alpha+2\beta)x-
\cfrac{\beta(\alpha+\beta)x^2}{1-(\alpha+2\beta)x-\cdots}}}.$$
Hence they too have a Hankel transform given by
$$h_n=(\beta(\alpha+\beta))^{\binom{n+1}{2}}.$$
The form of the continued fraction shows that the row sums are also moments for a family of orthogonal polynomials whose parameters can be read from the continued fraction.

Finally, the bi-variate generating function for the moment matrix $\left(\frac{1-\beta x}{1+\alpha x}, \frac{x(1-\beta x)}{1+\alpha x}\right)^{-1}$ is given by the generating function
$$
\cfrac{1}{1-(\alpha+\beta)x-xy-
\cfrac{\beta(\alpha+\beta)x^2}{1-(\alpha+2\beta)x-
\cfrac{\beta(\alpha+\beta)x^2}{1-(\alpha+2\beta)x-\cdots}}}.$$

\begin{example} The orthogonal polynomials $\tilde{P}_n(x;1,1)$ associated with the Riordan array $\left(\frac{1-\beta x}{1+\alpha x},\frac{x(1-\beta x)}{1+\alpha x}\right)$ have coefficient array given by
$$\left(\frac{1}{1+2x}, \frac{x}{1+3x+2x^2}\right).$$
\noindent The inverse of this matrix begins
\begin{displaymath}\left(\begin{array}{ccccccc} 1 & 0 & 0 & 0
&
0 & 0 & \cdots \\2 & 1 & 0 & 0 & 0 & 0 & \cdots \\ 6 & 5 & 1 &
0 & 0 & 0 &
\cdots \\ 22 & 23 & 8 & 1 & 0 & 0 & \cdots \\ 90 & 107 & 49 & 11 &
1 & 0 & \cdots \\394 & 509  & 276 & 84 & 14 & 1 &\cdots\\ \vdots
& \vdots &
\vdots & \vdots & \vdots & \vdots &
\ddots\end{array}\right),\end{displaymath} indicating that the large Schr\"oder numbers are the moments for this
family of orthogonal polynomials. This is borne out by the fact that the inverse is given by
$$\left(\frac{1-x-\sqrt{1-6x+x^2}}{2x}, \frac{1-3x-\sqrt{1-6x+x^2}}{4x}\right).$$
\noindent This is \seqnum{A133367}.
\end{example}

\section{Derivatives}
We consider the derivatives
$$\frac{d}{dx} P_n(x)=R_{n-1}(x), \quad n>0.$$
Since $P_n(x)$ is precisely of degree $n$, $R_n(x)$ is also of degree $n$, with $$R_0(x)=\frac{d}{dx}(x-(\alpha+\beta))=1.$$
We let $e_{n,k}$ be the $(n,k)$-th term of the coefficient array of $R_n(x)$. Thus we have
$$R_n(x)=\frac{d}{dx}P_{n+1}(x)=\sum_{k=0}^n e_{n,k}x^k.$$
\begin{proposition} The coefficient array whose $(n,k)$-th term is $\frac{1}{n+1} e_{n,k}$ is given by the Riordan array
$$\left(\left(\frac{1-\beta x}{1+\alpha x}\right)^2, \frac{x(1-\beta x)}{1+\alpha x}\right).$$
\end{proposition}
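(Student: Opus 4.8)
The plan is to reduce everything to the coefficient array $D=(d_{n,k})=\left(\frac{1-\beta x}{1+\alpha x},\frac{x(1-\beta x)}{1+\alpha x}\right)$ of the $P_n$, which is available from the first proposition, and then to read the entries $e_{n,k}$ of the derivative array off $D$ directly. Writing $P_{n+1}(x)=\sum_{k=0}^{n+1}d_{n+1,k}x^k$ and differentiating term by term gives $R_n(x)=\frac{d}{dx}P_{n+1}(x)=\sum_{k=0}^{n}(k+1)d_{n+1,k+1}x^k$, so that
$$e_{n,k}=(k+1)\,d_{n+1,k+1}.$$
This is the one structural input; everything after it is an identification of $d_{n+1,k+1}$ with an entry of the target array.

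First I would set $g(x)=\frac{1-\beta x}{1+\alpha x}$ and $f(x)=\frac{x(1-\beta x)}{1+\alpha x}$ and use the Bell-subgroup relation $f=xg$ that runs through the paper. Applied to the original array,
$$d_{n+1,k+1}=[x^{n+1}]g\,f^{k+1}=[x^{n+1}]x^{k+1}g^{k+2}=[x^{n-k}]g^{k+2},$$
while for the target $C=(c_{n,k})=\left(g^2,f\right)$,
$$c_{n,k}=[x^{n}]g^{2}f^{k}=[x^{n}]x^{k}g^{k+2}=[x^{n-k}]g^{k+2}.$$
Hence $c_{n,k}=d_{n+1,k+1}$ exactly: the target array is simply $D$ with its top row and left column deleted. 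This is the computational heart of the matter, and it is a two-line observation once the $f=xg$ trick is in hand.

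Combining the two displays, the normalized entry appearing in the statement is
$$\frac{1}{n+1}e_{n,k}=\frac{k+1}{n+1}\,d_{n+1,k+1}=\frac{k+1}{n+1}\,c_{n,k},$$
so proving the proposition as worded amounts to showing that the scalar $\frac{k+1}{n+1}$ may be dropped. This is exactly where I expect the argument to break, and it is the main obstacle: the factor equals $1$ only on the diagonal $k=n$, and off the diagonal it genuinely alters the entry. Testing against the explicit $\alpha=\beta=1$ array settles the point at once, since there $e_{1,0}=c_{1,0}=-4$ yet $\frac{1}{n+1}e_{1,0}=\tfrac{1}{2}(-4)=-2\neq-4$; indeed the whole first column $\frac{1}{n+1}e_{n,0}=1,-2,\tfrac{8}{3},-3,\tfrac{16}{5},\dots$ is not even integral, so it cannot be the first column of an integer Riordan array. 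I therefore do not expect the $\frac{1}{n+1}$ normalization to produce the stated matrix.

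The resolution the computation points to is to normalize by columns rather than rows: the factor $k+1$ thrown off by differentiation is a column weight, cancelled precisely by dividing the $k$-th column by $k+1$. With that normalization the first two displays give $\frac{1}{k+1}e_{n,k}=d_{n+1,k+1}=c_{n,k}$ immediately, and the proposition holds verbatim with $\frac{1}{k+1}e_{n,k}$ in place of $\frac{1}{n+1}e_{n,k}$. I would accordingly present the proof in that corrected form, reading the $\frac{1}{n+1}$ in the statement as a slip for $\frac{1}{k+1}$.
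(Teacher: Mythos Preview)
Your analysis is correct, and in fact it agrees with what the paper's own argument actually proves. The paper represents differentiation as right-multiplication of the coefficient array $(d_{n,k})$ by the subdiagonal matrix carrying $1,2,3,\ldots$, strips off the first row and column, and observes that the remaining left factor $(d_{n+1,k+1})$ is the Riordan array $(gf,f)=(g^2,f)$ by the Bell-subgroup relation $f=xg$; the concluding display is
\[
(g^2,f)\cdot\mathrm{Diag}(1,2,3,\ldots)=\bigl(e_{n,k}\bigr),
\]
which is exactly your identity $e_{n,k}=(k+1)\,d_{n+1,k+1}=(k+1)\,c_{n,k}$. Your coefficient-extraction computation $[x^{n+1}]gf^{k+1}=[x^{n-k}]g^{k+2}=[x^n]g^2f^k$ is the same step phrased without matrices. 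So your approach and the paper's are essentially identical; you have simply written out explicitly what the paper's matrix picture encodes.

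You are also right that the stated normalization $\frac{1}{n+1}e_{n,k}$ cannot give the claimed array: the paper's own final display divides column $k$ by $k+1$, not row $n$ by $n+1$, so the proposition as printed contains a slip and should read $\frac{1}{k+1}e_{n,k}$. Your numerical check with $\alpha=\beta=1$ confirming non-integer entries under the $\frac{1}{n+1}$ normalization is a clean way to certify the typo.
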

\noindent This results from a general result concerning Riordan arrays. We first note that we have
\begin{displaymath}\left(\begin{array}{ccccccc} 0 & 0 & 0 & 0
&
0 & 0 & \cdots \\1 & 0 & 0 & 0 & 0 & 0 & \cdots \\ 0 & 2 & 0 &
0 & 0 & 0 &
\cdots \\ 0 & 0 & 3 & 0 & 0 & 0 & \cdots \\ 0 & 0 & 0 & 4 &
0 & 0 & \cdots \\0 & 0  & 0 & 0 & 5 & 0 &\cdots\\ \vdots
& \vdots &
\vdots & \vdots & \vdots & \vdots &
\ddots\end{array}\right)\cdot
\left(\begin{array}{c}1\\x\\x^2\\x^3\\x^4\\x^5\\\vdots\end{array}\right)
=
\left(\begin{array}{c}0\\1\\2x\\3x^2\\4x^3\\5x^4\\\vdots\end{array}\right).\end{displaymath}
Thus we have
\begin{displaymath}\left(\begin{array}{ccccccc} d_{0,0} & 0 & 0 & 0
&
0 & 0 & \cdots \\d_{1,0} & d_{1,1} & 0 & 0 & 0 & 0 & \cdots \\ d_{2,0} & d_{2,1} & d_{2,2} &
0 & 0 & 0 &
\cdots \\ d_{3,0} & d_{3,1} & d_{3,2} & d_{3,3} & 0 & 0 & \cdots \\ d_{4,0} & d_{4,1} & d_{4,2} & d_{4,3} &
d_{4,4} & 0 & \cdots \\d_{5,0} & d_{5,1}  & d_{5,2} & d_{5,3} & d_{5,4} & d_{5,5} &\cdots\\ \vdots
& \vdots &
\vdots & \vdots & \vdots & \vdots &
\ddots\end{array}\right)\cdot
\left(\begin{array}{ccccccc} 0 & 0 & 0 & 0
&
0 & 0 & \cdots \\1 & 0 & 0 & 0 & 0 & 0 & \cdots \\ 0 & 2 & 0 &
0 & 0 & 0 &
\cdots \\ 0 & 0 & 3 & 0 & 0 & 0 & \cdots \\ 0 & 0 & 0 & 4 &
0 & 0 & \cdots \\0 & 0  & 0 & 0 & 5 & 0 &\cdots\\ \vdots
& \vdots &
\vdots & \vdots & \vdots & \vdots &
\ddots\end{array}\right)\cdot
\left(\begin{array}{c}1\\x\\x^2\\x^3\\x^4\\x^5\\\vdots\end{array}\right)
=
\left(\begin{array}{c}0\\R_0(x)\\R_1(x)\\R_2(x)\\R_3(x)\\R_4(x)\\\vdots\end{array}\right),\end{displaymath}
where in this case $\left(d_{n,k}\right)$ represents the Riordan array
$(g, f)=\left(\frac{1-\beta x}{1+\alpha x}, \frac{x(1-\beta x)}{1+\alpha x}\right)$.  We then have
\begin{displaymath}\left(\begin{array}{ccccccc} d_{1,1} & 0 & 0 & 0
&
0 & 0 & \cdots \\d_{2,1} & d_{2,2} & 0 & 0 & 0 & 0 & \cdots \\ d_{3,1} & d_{3,2} & d_{3,3} &
0 & 0 & 0 &
\cdots \\ d_{4,1} & d_{4,2} & d_{4,3} & d_{4,4} & 0 & 0 & \cdots \\ d_{5,1} & d_{5,2} & d_{5,3} & d_{5,4} &
d_{5,5} & 0 & \cdots \\d_{6,1} & d_{6,2}  & d_{6,3} & d_{6,4} & d_{6,5} & d_{6,6} &\cdots\\ \vdots
& \vdots &
\vdots & \vdots & \vdots & \vdots &
\ddots\end{array}\right)\cdot
\left(\begin{array}{ccccccc} 1 & 0 & 0 & 0
&
0 & 0 & \cdots \\0 & 2 & 0 & 0 & 0 & 0 & \cdots \\ 0 & 0 & 3 &
0 & 0 & 0 &
\cdots \\ 0 & 0 & 0 & 4 & 0 & 0 & \cdots \\ 0 & 0 & 0 & 0 &
5 & 0 & \cdots \\0 & 0  & 0 & 0 & 0 & 6 &\cdots\\ \vdots
& \vdots &
\vdots & \vdots & \vdots & \vdots &
\ddots\end{array}\right)\cdot
\left(\begin{array}{c}1\\x\\x^2\\x^3\\x^4\\x^5\\\vdots\end{array}\right)
=
\left(\begin{array}{c}R_0(x)\\R_1(x)\\R_2(x)\\R_3(x)\\R_4(x)\\R_5(x)\\\vdots\end{array}\right).\end{displaymath}
\noindent By the Riordan array structure of $(g, f)$, the leftmost matrix is given by $(gf, f)$. In other words,
we have
$$\left(\left(\frac{1-\beta x}{1+\alpha x}\right)^2, \frac{x(1-\beta x)}{1+\alpha x}\right) \cdot \text{Diag}(1,2,3,\ldots)=\left(\begin{array}{c}R_0(x)\\R_1(x)\\R_2(x)\\R_3(x)\\R_4(x)\\R_5(x)\\\vdots\end{array}\right).$$
\section{Generalized orthogonality}
Ismail and Masson \cite{I_M} have studied a notion of generalized orthogonality, associated with polynomials that satisfy recurrences of the type
$$P_n(x)=(x-c_n)P_{n-1}(x)-\lambda_n(x-a_n)P_{n-2}.$$
\noindent We specialize this to the constant coefficient case. Thus in this section we let $P_n(x)=P_n(x;\alpha, \beta, \gamma)$ be a family of polynomials that obeys the recurrence
$$P_n(x)=(x-\alpha)P_{n-1}(x)-\beta(x-\gamma)P_{n-2}(x),$$ with
$P_0(x)=1$ and $P_1(x)=x-\alpha-\beta$.

Following a similar development to that in the first section, we arrive at the following results.
\begin{proposition} The coefficient array of the polynomials $P_n(x)$ is given by the Riordan array
$$\left(\frac{1-\beta x}{1+\alpha x - \beta \gamma x^2}, \frac{x(1-\beta x)}{1+\alpha x - \beta \gamma x^2}\right).$$
\end{proposition}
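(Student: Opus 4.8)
The plan is to imitate exactly the computation carried out in the proof of the first proposition, now with the extra $-\beta\gamma x^2$ term in the denominators. First I would let $(d_{n,k})$ denote the coefficient array of the polynomials $P_n(x)=P_n(x;\alpha,\beta,\gamma)$ and translate the recurrence $P_n(x)=(x-\alpha)P_{n-1}(x)-\beta(x-\gamma)P_{n-2}(x)$ into a relation on the entries. Expanding $\sum_k d_{n,k}x^k$ and comparing coefficients of $x^k$, the term $-\beta\gamma x P_{n-2}$ contributes $+\beta\gamma\, d_{n-2,k}$ (no shift) while $-\beta x P_{n-2}$ contributes $-\beta\, d_{n-2,k-1}$ as before, so for $0<k<n$ one gets
$$d_{n,k}=d_{n-1,k-1}-\alpha d_{n-1,k}-\beta d_{n-2,k-1}+\beta\gamma d_{n-2,k}.$$
The initial conditions $P_0=1$, $P_1=x-\alpha-\beta$ fix the boundary entries $d_{0,0}=1$, $d_{1,0}=-\alpha-\beta$, $d_{1,1}=1$, which must be checked against the claimed Riordan array.

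Next I would compute the $A$-sequence of the candidate array $R=\left(\frac{1-\beta x}{1+\alpha x-\beta\gamma x^2},\frac{x(1-\beta x)}{1+\alpha x-\beta\gamma x^2}\right)$. Since this lies in the Bell subgroup $(g,xg)$, the $A$-sequence is $A(x)=x/\bar f(x)$ where $f(x)=\frac{x(1-\beta x)}{1+\alpha x-\beta\gamma x^2}$; equivalently, writing $v=\bar f(x)$, we have $x=\frac{v(1-\beta v)}{1+\alpha v-\beta\gamma v^2}$, so $A(x)=\frac{1+\alpha v-\beta\gamma v^2}{1-\beta v}$. One then uses the sequence characterization (Proposition in the excerpt), in the form $d_{n,k+1}=\sum_{j\ge0}a_j d_{n-1,k+j}$, together with the reversed relation expressing $d_{n-1,k}$ in terms of the $d_{n,k+j}$ via the coefficients $b_n$ of $1/A(x)=\bar f(x)/x$. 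The crucial algebraic point is that $1/A(x)=\frac{1-\beta v}{1+\alpha v-\beta\gamma v^2}$, i.e. $\bar f(x)/x$, and the generating-function identity $f(\bar f(x))=x$ should collapse the telescoping sum exactly as in the first proof, yielding
$$d_{n+1,k+1}=d_{n,k}-\alpha d_{n,k+1}-\beta d_{n-1,k}+\beta\gamma d_{n-2,k+1}.$$
Here the term $\beta\gamma d_{n-2,k+1}$ arises precisely because $-\beta\gamma d_{n-1,k}$ can be re-expanded using the reversed $A$-relation one more level down, just as $-\beta d_{n,k+1}-\beta(\alpha+\beta)d_{n,k+2}-\cdots$ was folded into $-\beta d_{n-1,k}$ in the original argument. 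Matching this against the recurrence on $d_{n,k}$ (with indices shifted) confirms that $R$'s entries obey the required relation. Finally I would verify the $Z$-sequence / first-column condition, which amounts to checking $g(x)=\frac{d_{0,0}}{1-xZ(f(x))}$ with $g(x)=\frac{1-\beta x}{1+\alpha x-\beta\gamma x^2}$, thereby pinning down the boundary column so that the array is uniquely the stated one.

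The main obstacle will be the bookkeeping in the reversed $A$-sequence expansion: with three parameters the $A$-sequence no longer has the clean closed form $\frac{2\beta x}{1-\alpha x-\sqrt{\cdots}}$ of the two-parameter case, so I would avoid writing $A(x)$ explicitly and instead work entirely with the implicit relation $A(x)=\frac{1+\alpha v-\beta\gamma v^2}{1-\beta v}$, $v=\bar f(x)$, and the identity $x A(x)=f(\bar f(x))\cdot A(x)/\ldots$ — more precisely, I would use that multiplying the defining relation $d_{n,k+1}=\sum_j a_j d_{n-1,k+j}$ by the appropriate shift and summing is equivalent to the single substitution $d_{n-1,k}\mapsto\sum_j b_j d_{n,k+j}$ with $\sum b_j x^j = \bar f(x)/x = (1-\beta x')/(1+\alpha x'-\beta\gamma x'^2)$ evaluated at $x'=\bar f(x)$. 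Once that substitution is in hand, the rest is the same three-line collapse as before; I expect the calculation to go through essentially verbatim with the single extra term tracked throughout. For completeness I would also note, as the excerpt does in the unparametrized case, that an ``$\alpha_{i,j}$''-matrix for this Riordan array is $\begin{bmatrix}-\beta & \beta\gamma\\ 1 & -\alpha\end{bmatrix}$, which records exactly the four-term recurrence and gives an alternative verification.
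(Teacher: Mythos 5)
Your overall strategy is the one the paper intends: the paper gives no separate proof of this proposition, saying only that it follows ``by a similar development to that in the first section,'' and re-running the $A$-sequence argument while tracking the extra $-\beta\gamma x^2$ term is exactly that development. Your translation of the recurrence into
$d_{n,k}=d_{n-1,k-1}-\alpha d_{n-1,k}-\beta d_{n-2,k-1}+\beta\gamma d_{n-2,k}$,
the boundary checks, and your closing $\alpha_{i,j}$-matrix are all correct.

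There are, however, two concrete errors that would derail the computation as written. First, with $v=\bar f(x)$ and $x=f(v)=\frac{v(1-\beta v)}{1+\alpha v-\beta\gamma v^2}$ one has $A(x)=x/v=\frac{1-\beta v}{1+\alpha v-\beta\gamma v^2}$ and $1/A(x)=v/x=\frac{1+\alpha v-\beta\gamma v^2}{1-\beta v}$; your two explicit formulas are swapped (so in particular your identification of $\frac{1-\beta v}{1+\alpha v-\beta\gamma v^2}$ with $\bar f(x)/x$ is false). Second, and more seriously, your target relation $d_{n+1,k+1}=d_{n,k}-\alpha d_{n,k+1}-\beta d_{n-1,k}+\beta\gamma d_{n-2,k+1}$ is \emph{not} the shifted form of the coefficient recurrence: replacing $n$ by $n+1$ and $k$ by $k+1$ above gives $+\beta\gamma d_{n-1,k+1}$, not $+\beta\gamma d_{n-2,k+1}$, so the final ``matching'' step would fail. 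The correct mechanism is not applying the reversed relation ``one more level down.'' Substituting $v=x/A(x)$ into $A(1+\alpha v-\beta\gamma v^2)=1-\beta v$ yields the functional equation
$$A(x)=1-\alpha x-\frac{\beta x}{A(x)}+\frac{\beta\gamma x^2}{A(x)},$$
and in the entrywise relation $d_{n+1,k+1}=\sum_j a_j d_{n,k+j}$ the term $-\beta x/A$ collapses to $-\beta\sum_j b_j d_{n,k+1+j}=-\beta d_{n-1,k}$ exactly as in the two-parameter case, while $+\beta\gamma x^2/A$ carries one extra factor of $x$, i.e.\ one extra \emph{column} shift at the same row, and collapses to $+\beta\gamma\sum_j b_j d_{n,k+2+j}=+\beta\gamma d_{n-1,k+1}$. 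With that correction the derived relation agrees with the coefficient recurrence and with your own $\alpha_{i,j}$-matrix (which, as you wrote it, already encodes $+\beta\gamma d_{n-1,k+1}$ and so contradicts your displayed recurrence). The rest of your plan then goes through.
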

\noindent Note that when $\gamma=0$, we retrieve our original case. When $\alpha+\beta \ne \gamma$, we have
\begin{proposition} The moments $\mu_n$ of the family of polynomials $P_n(x)$ are also the moments of the family of associated orthogonal polynomials $\tilde{P}_n(x)$ whose coefficient array is given by the Riordan array
$$\left(\frac{1+\beta x}{1+(\alpha+2\beta)x+\beta(\alpha+\beta-\gamma)x^2},\frac{x}{1+(\alpha+2\beta)x+\beta(\alpha+\beta-\gamma)x^2}\right).$$
\end{proposition}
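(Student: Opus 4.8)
The plan is to mimic the structure already used for the $\gamma=0$ case in Section 3, passing through $T$-fractions and Stieltjes continued fractions. First I would identify the moment generating function $g_{\alpha,\beta,\gamma}(x)$ of the Laurent-biorthogonal-type family $P_n(x;\alpha,\beta,\gamma)$ as the first-column generating function of the inverse of the Riordan array in Proposition 15, namely $\left(\frac{1-\beta x}{1+\alpha x-\beta\gamma x^2},\frac{x(1-\beta x)}{1+\alpha x-\beta\gamma x^2}\right)^{-1}$. Since this array lies in the Bell subgroup, its inverse has the form $\left(\frac{v(x)}{x},v(x)\right)$ with $v(x)=\mathrm{Rev}\!\left(\frac{x(1-\beta x)}{1+\alpha x-\beta\gamma x^2}\right)$; solving the quadratic $v(1-\beta v)=x(1+\alpha v-\beta\gamma v^2)$ for $v$ gives a closed form, and $g_{\alpha,\beta,\gamma}(x)=v(x)/x$. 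Expanding this as a continued fraction should yield the $T$-fraction associated to the recurrence $P_n=(x-\alpha)P_{n-1}-\beta(x-\gamma)P_{n-2}$, as the general theory of $T$-fractions and Laurent biorthogonal polynomials predicts.

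Next I would convert that $T$-fraction into an ordinary ($J$-fraction / Stieltjes) continued fraction. The key algebraic step is the standard contraction that turns
$$\cfrac{1}{1-\alpha x-\cfrac{\beta(x-\gamma)x}{1-\alpha x-\cfrac{\beta(x-\gamma)x}{1-\alpha x-\cdots}}}$$
into a continued fraction of the form $\cfrac{1}{1-b_0 x-\cfrac{a_1 x^2}{1-b_1 x-\cfrac{a_2 x^2}{1-b_2 x-\cdots}}}$; a bookkeeping of the first few convergents (or an even/odd contraction identity) pins down $b_0=\alpha+\beta$, $b_n=\alpha+2\beta$ for $n\ge 1$, and $a_n=\beta(\alpha+\beta-\gamma)$ for $n\ge 1$. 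Note the sign on $\beta$ in the numerator of the Riordan array for $\tilde P_n$ — since the coefficient array of orthogonal polynomials with recurrence $\tilde P_n=(x-c)\tilde P_{n-1}-\lambda\tilde P_{n-2}$, $\tilde P_1=x-c_1$, is $\left(\frac{1-(c_1-c)x}{1+cx+\lambda x^2},\frac{x}{1+cx+\lambda x^2}\right)$, taking $c=\alpha+2\beta$, $\lambda=\beta(\alpha+\beta-\gamma)$, $c_1=\alpha+\beta$ gives $c_1-c=-\beta$, hence the $1+\beta x$ numerator and the stated denominator $1+(\alpha+2\beta)x+\beta(\alpha+\beta-\gamma)x^2$. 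Once the Jacobi parameters $(b_n,a_n)$ are read off, Favard's theorem identifies the orthogonal family $\tilde P_n(x)$ and its coefficient array is exactly that Riordan array, so the moments of $\tilde P_n$ equal the continued fraction $g_{\alpha,\beta,\gamma}(x)$, which is the moment g.f. of $P_n$.

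The main obstacle I anticipate is the contraction step: verifying that the $T$-fraction for the three-parameter recurrence contracts to a genuinely periodic-from-$n=1$ $J$-fraction with the claimed constants, and in particular confirming the hypothesis $\alpha+\beta\ne\gamma$ is exactly what guarantees $a_n=\beta(\alpha+\beta-\gamma)\ne 0$ so that an honest orthogonal family exists (when $\alpha+\beta=\gamma$ the sequence degenerates and $P_n$ is no longer, in general, an orthogonal-polynomial moment sequence in the usual sense). A clean way to handle it is to avoid continued fractions entirely: compute $v(x)$ in closed form from the quadratic, form $g=v/x$, and then directly verify the functional equation $g(x)=\frac{1}{1-(\alpha+\beta)x-\beta(\alpha+\beta-\gamma)x^2\,\hat g(x)}$ where $\hat g$ is the tail satisfying $\hat g(x)=\frac{1}{1-(\alpha+2\beta)x-\beta(\alpha+\beta-\gamma)x^2\hat g(x)}$; both $g$ and $\hat g$ are then explicit algebraic functions and the identity reduces to polynomial algebra. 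Finally, I would package the conclusion: by the sequence characterization the displayed array is a Riordan array, its inverse has tridiagonal production matrix with the constants $(b_n,a_n)$ above, and hence — exactly as in Proposition 11 — it is the coefficient array of the orthogonal polynomials $\tilde P_n(x)$ whose moments are $\mu_n$, completing the proof.
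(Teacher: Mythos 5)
Your strategy is workable and genuinely different from the paper's, but your primary route contains one concrete error. The paper's (largely implicit) proof is pure Riordan algebra: it verifies by the multiplication rule the factorization $M=\tilde{B}\cdot N$, where $M=\left(\frac{1-\beta x}{1+\alpha x-\beta\gamma x^2},\frac{x(1-\beta x)}{1+\alpha x-\beta\gamma x^2}\right)$ is the coefficient array of $P_n$, $N$ is the displayed array, and $\tilde{B}=\left(1,\frac{x}{1-\beta x}\right)$ (this is the content of the later proposition relating $P_n$ and $\tilde{P}_n$). Since the first column of $\tilde{B}^{-1}=\left(1,\frac{x}{1+\beta x}\right)$ is $(1,0,0,\ldots)^T$, inverting gives $M^{-1}e_0=N^{-1}\tilde{B}^{-1}e_0=N^{-1}e_0$, so the two inverse arrays share their first column, i.e.\ their moments; and the $A$- and $Z$-sequence computation (exactly as in the earlier associated-orthogonal-polynomials proposition) shows $N^{-1}$ has a tridiagonal production matrix with diagonal $\alpha+\beta,\alpha+2\beta,\alpha+2\beta,\ldots$ and subdiagonal constantly $\beta(\alpha+\beta-\gamma)$, which is the orthogonality statement. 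Your route through continued fractions reaches the same Jacobi data, and your identification of the numerator $1+\beta x$ via $c_1-c=-\beta$ is correct; but it is considerably heavier than the paper's one-line factorization, which also hands you the explicit transition matrix between the two polynomial families for free.

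The error is in the $T$-fraction you write down: the partial numerators $\beta(x-\gamma)x$ are wrong, and your formula does not even reduce to the paper's $T$-fraction (whose numerators are $\beta x$, not $\beta x^2$) when $\gamma=0$. You have conflated the variable of the polynomials with the variable of the moment generating function. The correct equation comes from $f(v)=x$ with $f(x)=\frac{x(1-\beta x)}{1+\alpha x-\beta\gamma x^2}$ and $u=v/x$: dividing $v(1-\beta v)=x(1+\alpha v-\beta\gamma v^2)$ by $x$ yields
$$u=1+\alpha x u+\beta x(1-\gamma x)u^2,$$
so the periodic numerator is $\beta x(1-\gamma x)$, not $\beta x(x-\gamma)$. (Check: for $\alpha=2,\beta=\gamma=1$ the corrected equation gives $u=1+3x+11x^2+\cdots$, matching the paper's moments $1,3,11,47,\ldots$, whereas your version gives $1+x+\cdots$.) Your fallback plan --- solve the quadratic for $v$, set $g=v/x$, and verify directly that $g=\frac{1}{1-(\alpha+\beta)x-\beta(\alpha+\beta-\gamma)x^2\hat{g}}$ where $\hat{g}$ satisfies the tail equation --- is sound once the functional equation for $g$ is corrected, and together with your (correct) translation of the Jacobi parameters into the stated Riordan array it does complete the proof; the nondegeneracy remark about $\alpha+\beta\ne\gamma$ is also right, provided you add the standing assumption $\beta\ne 0$.
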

\begin{corollary}
The Hankel transform of the moments $\mu_n$ is given by
$$h_n=(\beta(\alpha+\beta-\gamma))^{\binom{n+1}{2}}.$$
\end{corollary}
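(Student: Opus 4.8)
The plan is to reduce the Hankel transform computation to a Stieltjes-type (Jacobi) continued fraction for the moment generating function, since the Hankel transform of a moment sequence is completely determined by the coefficients $\beta_1,\beta_2,\dots$ appearing in such a fraction via the classical formula $h_n = \mu_0^{n+1}\,\beta_1^n\,\beta_2^{n-1}\cdots\beta_n$. By the preceding proposition, the moments $\mu_n$ of $P_n(x;\alpha,\beta,\gamma)$ coincide with those of the orthogonal family $\tilde P_n(x)$ whose coefficient array is the Riordan array
$$\left(\frac{1+\beta x}{1+(\alpha+2\beta)x+\beta(\alpha+\beta-\gamma)x^2},\ \frac{x}{1+(\alpha+2\beta)x+\beta(\alpha+\beta-\gamma)x^2}\right),$$
so it suffices to read off the three-term recurrence coefficients of $\tilde P_n$ and apply the Hankel formula.

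First I would compute the production matrix of the inverse of this Riordan array, exactly as was done in the proof of the proposition on associated orthogonal polynomials in Section~3. The inverse is obtained by reverting $f(x)=\frac{x}{1+(\alpha+2\beta)x+\beta(\alpha+\beta-\gamma)x^2}$, and because the bottom generator is of the form $x/(1+bx+cx^2)$ with $b=\alpha+2\beta$ and $c=\beta(\alpha+\beta-\gamma)$, the production matrix is tridiagonal. I expect it to have the constant subdiagonal entry $c=\beta(\alpha+\beta-\gamma)$, constant main-diagonal entry $\alpha+2\beta$ below the corner, and a modified corner entry coming from the numerator $g(x)=\frac{1+\beta x}{1+(\alpha+2\beta)x+\beta(\alpha+\beta-\gamma)x^2}$ — mirroring the two-parameter case where the corner was $\alpha+\beta$ while the interior diagonal was $\alpha+2\beta$. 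Reading a tridiagonal production matrix as a three-term recurrence gives $\tilde P_n(x)=(x-c_n)\tilde P_{n-1}(x)-\lambda_n\tilde P_{n-2}(x)$ with $\lambda_n = \beta(\alpha+\beta-\gamma)$ for all $n\ge 2$.

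Second, I would translate this into the $J$-fraction / Stieltjes continued fraction
$$g_{\alpha,\beta,\gamma}(x)=\cfrac{1}{1-c_0 x-\cfrac{\lambda_1 x^2}{1-c_1 x-\cfrac{\lambda_2 x^2}{1-c_2 x-\cdots}}},$$
with every $\lambda_i = \beta(\alpha+\beta-\gamma)$ (the first-level coefficient $\lambda_1$ will likewise equal $\beta(\alpha+\beta-\gamma)$, and $\mu_0=1$). The Heilermann–Hankel formula then gives $h_n=\prod_{i=1}^{n}\lambda_i^{\,n+1-i}=(\beta(\alpha+\beta-\gamma))^{1+2+\cdots+n}=(\beta(\alpha+\beta-\gamma))^{\binom{n+1}{2}}$, which is the claim.

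The main obstacle is verifying that the subdiagonal of the production matrix is genuinely the constant $\beta(\alpha+\beta-\gamma)$ throughout — in particular at the top, so that $\lambda_1$ is not some exceptional value. This requires carefully carrying out the Lagrange inversion (or production-matrix) computation for the specific quadratic denominator $1+(\alpha+2\beta)x+\beta(\alpha+\beta-\gamma)x^2$ and checking the first two or three rows explicitly; everything after that is the routine application of the classical Hankel-determinant formula and needs no further justification. The hypothesis $\alpha+\beta\ne\gamma$ is what guarantees $\lambda_i\ne 0$, so that $\tilde P_n$ is a bona fide orthogonal family and the formula is valid.
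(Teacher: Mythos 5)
Your proposal is correct and follows essentially the same route the paper intends: the corollary is meant to be read off from the preceding proposition exactly as in Section 3, by computing the tridiagonal production matrix of the inverse of the coefficient array of $\tilde{P}_n$ (whose $A$- and $Z$-sequences here are the polynomials $1+(\alpha+2\beta)x+\beta(\alpha+\beta-\gamma)x^2$ and $(\alpha+\beta)+\beta(\alpha+\beta-\gamma)x$, confirming the constant subdiagonal $\beta(\alpha+\beta-\gamma)$ you were worried about), and then applying the classical Hankel--continued-fraction formula. The only difference is that the paper leaves this verification implicit, while you spell it out.
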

\noindent The relationship between the generalized orthogonal polynomials $P_n(x)$ of this section and their associated orthogonal polynomials $\tilde{P}_n(x)$ is given by the following proposition, which can be verified by applying the multiplication rule for Riordan arrays.
\begin{proposition} \footnotesize
$$ \left(\frac{1-\beta x}{1+\alpha x - \beta \gamma x^2}, \frac{x(1-\beta x)}{1+\alpha x - \beta \gamma x^2}\right)=
\tilde{B}\cdot \left(\frac{1+\beta x}{1+(\alpha+2\beta)x+\beta(\alpha+\beta-\gamma)x^2},\frac{x}{1+(\alpha+2\beta)x+\beta(\alpha+\beta-\gamma)x^2}\right),$$ \normalsize where
$$\tilde{B}=\left(1, \frac{x}{1-\beta x}\right).$$
\end{proposition}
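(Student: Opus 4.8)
The plan is to prove the identity by a direct computation of the Riordan-group product on the right-hand side, using the multiplication law $(g,f)\cdot(h,l)=(g\,(h\circ f),\,l\circ f)$ recalled in the introduction. Write $\tilde{B}=(1,f)$ with $f(x)=\dfrac{x}{1-\beta x}$, and let $(h,l)$ denote the coefficient array of the associated orthogonal polynomials from the preceding proposition, so that
\[
h(x)=\frac{1+\beta x}{Q(x)},\qquad l(x)=\frac{x}{Q(x)},\qquad Q(x)=1+(\alpha+2\beta)x+\beta(\alpha+\beta-\gamma)x^2 .
\]
Since the first component of $\tilde{B}$ is $1$, the product is simply $\tilde{B}\cdot(h,l)=(h\circ f,\ l\circ f)$, so the whole proof reduces to substituting $x\mapsto \dfrac{x}{1-\beta x}$ in the two rational functions $h$ and $l$ and simplifying.

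The one calculation that carries the argument is the behaviour of $Q$ under this substitution. I would clear denominators by writing
\[
Q\!\left(\frac{x}{1-\beta x}\right)(1-\beta x)^2=(1-\beta x)^2+(\alpha+2\beta)x(1-\beta x)+\beta(\alpha+\beta-\gamma)x^2,
\]
and then check the two coefficient collapses $-2\beta+(\alpha+2\beta)=\alpha$ and $\beta^2-(\alpha+2\beta)\beta+\beta(\alpha+\beta-\gamma)=-\beta\gamma$, which give
\[
Q\!\left(\frac{x}{1-\beta x}\right)=\frac{1+\alpha x-\beta\gamma x^2}{(1-\beta x)^2}.
\]
Combining this with $1+\beta\cdot\dfrac{x}{1-\beta x}=\dfrac{1}{1-\beta x}$ yields
\[
h\!\left(\frac{x}{1-\beta x}\right)=\frac{1-\beta x}{1+\alpha x-\beta\gamma x^2},\qquad
l\!\left(\frac{x}{1-\beta x}\right)=\frac{x(1-\beta x)}{1+\alpha x-\beta\gamma x^2},
\]
which are exactly the two components of the Riordan array on the left-hand side.

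There is no genuine obstacle here: the statement is, as noted, a verification by the multiplication rule, and the only point requiring care is the elementary bookkeeping in expanding $(1-\beta x)^2+(\alpha+2\beta)x(1-\beta x)+\beta(\alpha+\beta-\gamma)x^2$. As a consistency check I would specialize to $\gamma=0$: then $Q(x)=(1+\beta x)(1+(\alpha+\beta)x)$, the factor $1+\beta x$ cancels, and the identity collapses to the corollary stated after the associated-orthogonal-polynomials proposition, namely $\left(\frac{1-\beta x}{1+\alpha x},\frac{x(1-\beta x)}{1+\alpha x}\right)=\left(1,\frac{x}{1-\beta x}\right)\cdot\left(\frac{1}{1+(\alpha+\beta)x},\frac{x}{(1+\beta x)(1+(\alpha+\beta)x)}\right)$, confirming that the $\gamma$-dependence enters only through the quadratic terms.
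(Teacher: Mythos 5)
Your proof is correct and follows exactly the route the paper indicates: the paper states only that the identity ``can be verified by applying the multiplication rule for Riordan arrays,'' and your computation of $h\circ f$ and $l\circ f$ with $f(x)=\frac{x}{1-\beta x}$, including the key simplification $Q\left(\frac{x}{1-\beta x}\right)=\frac{1+\alpha x-\beta\gamma x^2}{(1-\beta x)^2}$, is precisely that verification carried out in full. The $\gamma=0$ consistency check against the earlier corollary is a nice addition.
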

\begin{example}
We consider the polynomials $P_n(x)=P_n(x;2,1,1)$ with coefficient array
$$\left(\frac{1-x}{1+2x-x^2}, \frac{x(1-x)}{1+2x-x^2}\right).$$ \noindent This begins
\begin{displaymath}\left(\begin{array}{ccccccc} 1 & 0 & 0 & 0
&0 & 0 & \cdots \\-3 & 1 & 0 & 0 & 0 & 0 & \cdots \\ 7 & -6 & 1 &
0 & 0 & 0 &
\cdots \\ -17 & 23 & -9 & 1 & 0 & 0 & \cdots \\ 41 & -76 & 48 & -12 &
1 & 0 & \cdots \\-99 & 233  & -204 & 82 & -15 & 1 &\cdots\\ \vdots
& \vdots &
\vdots & \vdots & \vdots & \vdots &
\ddots\end{array}\right).\end{displaymath}
$P_n(x)$ satisfies the recurrence
$$P_n(x)=(x-2)P_{n-1}-(x-1)P_{n-2}$$
with $P_0(x)=1$ and $P_1(x)=x-3$.
The moment array is given by $\left(\frac{1-x}{1+2x-x^2}, \frac{x(1-x)}{1+2x-x^2}\right)^{-1}$ which begins
\begin{displaymath}\left(\begin{array}{ccccccc} 1 & 0 & 0 & 0
&0 & 0 & \cdots \\3 & 1 & 0 & 0 & 0 & 0 & \cdots \\11 & 6 & 1 &
0 & 0 & 0 &
\cdots \\ 47 & 31 & 9 & 1 & 0 & 0 & \cdots \\ 223 & 160 & 60 & 12 &
1 & 0 & \cdots \\ 1135 & 849  & 366 & 98 & 15 & 1 &\cdots\\ \vdots
& \vdots &
\vdots & \vdots & \vdots & \vdots &
\ddots\end{array}\right).\end{displaymath}
Thus the moment sequence $\mu_n$ starts
$$1,3,11,47,223,1135,\ldots$$
\noindent This is the binomial transform \seqnum{A174347} of the large Schr\"oder numbers.
$$\mu_n=\sum_{k=0}^n \binom{n}{k} S_k.$$
\noindent This is a consequence of the fact that
\begin{equation}\label{Eq}\left(\frac{1-x}{1+2x-x^2}, \frac{x(1-x)}{1+2x-x^2}\right) \cdot B = \left(\frac{1-x}{1+x}, \frac{x(1-x)}{1+x}\right).\end{equation}
\noindent We note that $\left(\frac{1-x}{1+x}, \frac{x(1-x)}{1+x}\right)$ is the coefficient array of the Laurent biorthogonal  polynomials
$$Q_n(x)=(x-1)Q_{n-1}-xQ_{n-2}.$$
Equation (\ref{Eq}) is equivalent to
$$P_n(x+1)=Q_n(x).$$
\end{example}
\noindent We can also transform the coefficient array $ \left(\frac{1-\beta x}{1+\alpha x - \beta \gamma x^2}, \frac{x(1-\beta x)}{1+\alpha x - \beta \gamma x^2}\right)$ of $P_n(x;\alpha, \beta, \gamma)$ by multiplication on the right to obtain the coefficient array of an associated family of Laurent biorthogonal polynomials. This is the content of the next result, which is verifiable by straight-forward Riordan array multiplication.
\begin{proposition} We have
$$\left(\frac{1-\beta x}{1+\alpha x - \beta \gamma x^2}, \frac{x(1-\beta x)}{1+\alpha x - \beta \gamma x^2}\right) \cdot B_{\gamma} = \left(\frac{1-\beta x}{1+(\alpha-\gamma)x}, \frac{x(1-\beta x)}{1+(\alpha-\gamma)x}\right),$$ where
 $$B_{\gamma}=\left(\frac{1}{1-\gamma x},\frac{x}{1-\gamma x}\right).$$
\end{proposition}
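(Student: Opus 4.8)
The plan is to apply the Riordan group multiplication law $(g,f)\cdot(h,l)=(g\cdot(h\circ f),\,l\circ f)$ directly, with $(g,f)=\left(\frac{1-\beta x}{1+\alpha x-\beta\gamma x^2},\frac{x(1-\beta x)}{1+\alpha x-\beta\gamma x^2}\right)$ and $(h,l)=B_\gamma=\left(\frac{1}{1-\gamma x},\frac{x}{1-\gamma x}\right)$. Everything reduces to one auxiliary computation: evaluating $1-\gamma f(x)$, where $f(x)=\frac{x(1-\beta x)}{1+\alpha x-\beta\gamma x^2}$. The key observation I expect to drive the whole proof is that the $\beta\gamma x^2$ terms cancel, namely
$$1-\gamma f(x)=\frac{1+\alpha x-\beta\gamma x^2-\gamma x(1-\beta x)}{1+\alpha x-\beta\gamma x^2}=\frac{1+(\alpha-\gamma)x}{1+\alpha x-\beta\gamma x^2}.$$

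Having this identity, I would then compute the two entries of the product in turn. For the first entry, $h\circ f=\frac{1}{1-\gamma f}=\frac{1+\alpha x-\beta\gamma x^2}{1+(\alpha-\gamma)x}$, so that
$$g\cdot(h\circ f)=\frac{1-\beta x}{1+\alpha x-\beta\gamma x^2}\cdot\frac{1+\alpha x-\beta\gamma x^2}{1+(\alpha-\gamma)x}=\frac{1-\beta x}{1+(\alpha-\gamma)x},$$
which matches the claimed first component. For the second entry, $l\circ f=\frac{f}{1-\gamma f}=\frac{x(1-\beta x)}{1+\alpha x-\beta\gamma x^2}\cdot\frac{1+\alpha x-\beta\gamma x^2}{1+(\alpha-\gamma)x}=\frac{x(1-\beta x)}{1+(\alpha-\gamma)x}$, which matches the claimed second component. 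Assembling the two pieces gives exactly the right-hand side of the proposition.

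In short, there is no real obstacle here: the statement is essentially an algebraic identity, and the only point requiring a moment of care is the cancellation of the quadratic term $\beta\gamma x^2$ in $1-\gamma f$, after which both components simplify by clearing the common factor $1+\alpha x-\beta\gamma x^2$. One could additionally sanity-check the result by noting that specializing $\gamma=0$ collapses $B_\gamma$ to the identity $(1,x)$ and returns the Riordan array of Proposition~2, consistent with the remark preceding this proposition. It would also be worth remarking that, since the right-hand side again lies in the Bell subgroup and has the form $\left(\frac{1-\beta x}{1+(\alpha-\gamma)x},\frac{x(1-\beta x)}{1+(\alpha-\gamma)x}\right)$, Proposition~2 identifies it as the coefficient array of the Laurent biorthogonal polynomials with recurrence $P_n(x)=(x-(\alpha-\gamma))P_{n-1}(x)-\beta xP_{n-2}(x)$, so the transform $B_\gamma$ effects the parameter shift $\alpha\mapsto\alpha-\gamma$, $\gamma\mapsto 0$.
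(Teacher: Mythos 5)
Your computation is correct and is exactly what the paper intends: it states the result is ``verifiable by straight-forward Riordan array multiplication'' and gives no further detail, and your application of $(g,f)\cdot(h,l)=(g\cdot(h\circ f),\,l\circ f)$ with the cancellation of $\beta\gamma x^2$ in $1-\gamma f$ supplies precisely that verification. No discrepancy with the paper's approach.
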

\begin{corollary} Using an obvious notation, we have
$$P_n(x+\gamma; \alpha, \beta, \gamma)=P_n(x; \alpha-\gamma, \beta).$$
\end{corollary}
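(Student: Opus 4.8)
The plan is to read the preceding Proposition at the level of polynomials rather than matrices, exactly as was done for the corollary relating $P_n$ and $\tilde P_n$ in the first section. Write $D=\left(\frac{1-\beta x}{1+\alpha x-\beta\gamma x^2},\frac{x(1-\beta x)}{1+\alpha x-\beta\gamma x^2}\right)$ for the coefficient array of $P_n(x;\alpha,\beta,\gamma)$, so that $P_n(x;\alpha,\beta,\gamma)=\sum_k d_{n,k}x^k$. The first step is to recall, from the Example in the Introduction, that $B_\gamma=\left(\frac{1}{1-\gamma x},\frac{x}{1-\gamma x}\right)$ is the (generalized) binomial matrix $\mathbf{B}^\gamma$, with general term $\binom{n}{k}\gamma^{n-k}$.

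The key step is to observe that right multiplication of a coefficient array by $B_\gamma$ realizes the substitution $x\mapsto x+\gamma$ on the associated polynomial family. Concretely, if $E=D\cdot B_\gamma$ then $e_{n,k}=\sum_j d_{n,j}\binom{j}{k}\gamma^{j-k}$, and hence
$$\sum_k e_{n,k}x^k=\sum_j d_{n,j}\sum_k \binom{j}{k}\gamma^{j-k}x^k=\sum_j d_{n,j}(x+\gamma)^j=P_n(x+\gamma;\alpha,\beta,\gamma),$$
the only ingredients being an interchange of the two finite sums and the binomial theorem.

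To finish, I would invoke the preceding Proposition, which gives $D\cdot B_\gamma=\left(\frac{1-\beta x}{1+(\alpha-\gamma)x},\frac{x(1-\beta x)}{1+(\alpha-\gamma)x}\right)$. By the second proposition of the Introduction, with $\alpha$ replaced throughout by $\alpha-\gamma$, this Riordan array is precisely the coefficient array of the Laurent biorthogonal polynomials $P_n(x;\alpha-\gamma,\beta)$. Reading off the $n$-th row of $D\cdot B_\gamma$ in the two ways just described yields $P_n(x+\gamma;\alpha,\beta,\gamma)=P_n(x;\alpha-\gamma,\beta)$.

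I do not expect a genuine obstacle: the substantive content is the Riordan-array identity of the preceding Proposition, whose verification is the straightforward multiplication already mentioned there, and the only thing needing care is the index bookkeeping in the interchange of sums (together with the harmless fact that $d_{n,j}=0$ for $j>n$, so all sums are finite). One could alternatively dispense with the matrix language and prove the corollary directly by induction on $n$, checking that $x\mapsto P_n(x+\gamma;\alpha,\beta,\gamma)$ satisfies $P_n=(x-(\alpha-\gamma))P_{n-1}-\beta x P_{n-2}$ with the correct initial conditions; but the Riordan-array route is shorter and reuses machinery already in place.
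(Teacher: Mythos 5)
Your proposal is correct and is essentially the paper's own argument: the paper proves the corollary in one line by noting that $B_{\gamma}\cdot (1,x,x^2,\ldots)^T=(1,x+\gamma,(x+\gamma)^2,\ldots)^T$, which is exactly your observation that right multiplication by $B_\gamma$ realizes the substitution $x\mapsto x+\gamma$, merely written in compact matrix form rather than with the indexwise sum interchange you spell out. Your identification of the resulting array with the coefficient array of $P_n(x;\alpha-\gamma,\beta)$ via the earlier proposition is likewise the intended reading.
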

\begin{proof} This follows since
$$B_{\gamma}\cdot (1,x,x^2,\ldots)^T=(1,x+\gamma, (x+\gamma)^2, \ldots)^T.$$
\end{proof}

\section{Determinant representations}
It is possible to give determinant representations for the polynomial sequences \cite{Yang} in this note.

Taking the polynomials defined by the Riordan array $M=\left(\frac{1-\beta x}{1+\alpha x}, \frac{x(1-\beta x}{1+\alpha x}\right)$, we calculate $A(x)$ and $Z(x)$ for the inverse $\left(\frac{1-\beta x}{1+\alpha x}, \frac{x(1-\beta x}{1+\alpha x}\right)^{-1}$ to get
$$A(x)=\frac{1+\alpha x}{1-\beta x}, \quad \quad Z(x)=\frac{\alpha + \beta }{1-\beta x}.$$
\noindent Thus the production matrix of the inverse  $M^{-1}$  begins
\begin{displaymath}\left(\begin{array}{ccccccc} \alpha+\beta & 1 &
0 & 0 & 0 & 0 & \ldots \\
\beta(\alpha+\beta)  & \alpha+\beta & 1 & 0 & 0 & 0 & \ldots \\
\beta^2(\alpha+\beta)  &\beta(\alpha+\beta) & \alpha+\beta  & 1 & 0 & 0 & \ldots \\
\beta^3(\alpha+\beta) & \beta^2(\alpha+\beta) & \beta(\alpha+\beta)  & \alpha+\beta  & 1 & 0 & \ldots \\
\beta^4(\alpha+\beta) & \beta^3(\alpha+\beta) & \beta^2(\alpha+\beta)
& \beta(\alpha+\beta)  & \alpha+\beta & 1 & \ldots \\

\beta^5(\alpha+\beta) & \beta^4(\alpha+\beta) & \beta^3(\alpha+\beta) & \beta^2(\alpha+\beta) & \beta(\alpha+\beta)  & \alpha+\beta
&\ldots\\
\vdots &
\vdots & \vdots & \vdots & \vdots & \vdots &
\ddots\end{array}\right),\end{displaymath}
\noindent
Letting $\mathbf{P}_n$ denote the principal submatrix of order $n$ of this production matrix, we have
$$P_n(x)=P_n(x;\alpha, \beta)=\det(x I_n-\mathbf{P}_n),$$
where $I_n$ is the identity matrix of order $n$.

Similar remarks hold for the other polynomials defined in this article.

\bigskip \hrule \bigskip

\noindent 2010 {\it Mathematics Subject Classification}: Primary 11C20; Secondary 11B83, 15B36, 33C45.
\\
\noindent \emph{Keywords:
Riordan array, Laurent biorthogonal polynomials, orthogonal polynomials, generalized orthogonal polynomial, A sequence, Z sequence, production matrix}.

 \bigskip \hrule \bigskip \noindent Concerns sequences

\seqnum{A000045},
\seqnum{A006318},
\seqnum{A080246},
\seqnum{A080247},
\seqnum{A133367},
\seqnum{A174347}.

\end{document}